\title[No-propagation for random linear response] 
{
No-propagate algorithm for linear responses of random chaotic systems 
}
\begin{document}

\begin{abstract}

We develop the no-propagate algorithm for sampling the linear response of random dynamical systems, which are non-uniform hyperbolic deterministic systems perturbed by noise with smooth density.
We first derive a Monte-Carlo type formula and then the algorithm, which is different from the ensemble (stochastic gradient) algorithms, finite-element algorithms, and fast-response algorithms; it does not involve the propagation of vectors or covectors, and only the density of the noise is differentiated, so the formula is not cursed by gradient explosion, dimensionality, or non-hyperbolicity.
We demonstrate our algorithm on a tent map perturbed by noise and a chaotic neural network with 51 layers $\times$ 9 neurons.

By itself, this algorithm approximates the linear response of non-hyperbolic deterministic systems, with an additional error proportional to the noise.
We also discuss the potential of using this algorithm as a part of a bigger algorithm with smaller error.

\smallskip
\noindent \textbf{Keywords.}
chaos, 
linear response,
random dynamical systems,
adjoint method,
recurrent neural network.

\smallskip
\noindent \textbf{AMS subject classification numbers.}
37M25, 
65D25, 
65P99, 
65C05, 
62D05, 
60G10. 
\end{abstract}

\maketitle

\section{Introduction}
\label{s:intro}

\subsection{Literature review}
\hfill\vspace{0.1in}

The long-time-average statistic of a chaotic system is given by the physical measure, which is the invariant measure obtained as the limit of evolving a Lebesgue measure  \cite{young2002srb,srbmap,srbflow}.
We are interested in the derivative of the long-time-average of an observable function with respect to the parameters of the system.
This derivative is also called the linear response, and it is one of the most basic numerical tools for many purposes.
For example, in aerospace design, we want to know the how small changes in the geometry would affect the average lift of an aircraft, 
which was answered for non-chaotic fluids \cite{Jameson1988} but only partially answered for not-so-chaotic fluids \cite{Ni_CLV_cylinder}.
In climate change, we want to ask what is the temperature change caused by a small change of $CO_2$ level \cite{Ghil2020,Froyland2021}.
In machine learning, we want to extend the conventional backpropagation method to cases with gradient explosion; current practices have to avoid gradient explosion but that is not always achievable \cite{clip_gradients2,resnet}.
These are all linear responses, and an efficient algorithm would help the advance of the above fields.

The are two major settings for computing linear responses: deterministic and random.
For deterministic systems, both the proof and the algorithm for linear responses require enough amount of hyperbolicity.
The most well-known formulas are the ensemble formula and the operator formula , which have been rigorously proved under various hyperbolicity assumptions, and they are formally equivalent under integration-by-parts \cite{Gallavotti1996,Ruelle_diff_maps,Ruelle_diff_maps_erratum,Ruelle_diff_flow,Dolgopyat2004,Gouezel2006,Baladi2007,Korepanov2016}.
Since most real-life systems have large degrees of freedom, the dynamical systems are high-dimensional, where the most efficient way to sample physical-measure related and integrated quantities is Monte-Carlo, that is, to take long-time average on a sample orbit.
The essential difficulties for such Monte-Carlo algorithm for linear responses are the explosion phenomena in the pushforward on vectors, and the roughness in the unstable/stable subspaces.
These difficulties are efficiently solved by the fast response formulas and algorithms with cost $O(Mu)$ per step, where $M$ is the system dimension and $u$ is the unstable dimension \cite{fr,Ni_asl,Ni_nilsas,Ni_NILSS_JCP,Ni_CLV_cylinder}.
The fast formulas can be interpreted as recursive formulas for the unstable perturbation of measure transfer operator on unstable manifolds \cite{far,TrsfOprt}.
However, although the above works tolerate non(uniform)-hyperbolicity to some degree, they all fail beyond certain amount of non-hyperbolicity \cite{Baladi2007,Gottwald2016,Wormell2019,wormell22,wormell_pieceMap}.

For random systems, the linear response formula can take another form \cite{Hairer2010,bahsoun20,Antown2022}, and annealed linear response can exist when quenched linear response does not \cite{sedro23}.
Here annealed means to average over the probability space whereas quenched means to fix a specific sequence of (previously random) maps.
In particular, when we have a deterministic maps perturbed by independent noise with smooth density, we can arrange the derivatives so that they only hit the density function of the noise.
Hence, the formula works regardless of whether the deterministic part is hyperbolic or not.
So far the numerical algorithms are essentially finite-element method, which incurs an approximation error, and can provide more information than our approach, such as the second eigenvalue of the transfer operator, which is useful for improving mixing rate \cite{Antown2022}.
However, finite-element method is not suitable for high-dimensions.

\subsection{Main results}
\hfill\vspace{0.1in}

In this paper we develop a Monte-Carlo method for sampling linear responses of random dynamical systems with smooth density.
This algorithm runs on sample orbits; this incurs an additional sampling error, but it does not have the finite-element approximation error.
Overall, compared to the previous finite-element method for random linear responses, this orbit-based Monte-Carlo approach is more suitable for high-dimensions.

The first part of our paper derives an integral formula for the linear response of deterministic systems perturbed by noise.
The main points of interests are
\begin{itemize}
  \item All dummy variables are from probabilities which we can easily sample.
  \item The formula does not involve Jacobian matrices hence does not incur propagations of vectors or covectors (parameter gradients).
\end{itemize}
Such a formula enables Monte-Carlo sampling of the linear response; in the context of dynamics, the only interpretation of Monte-Carlo should be to sample by a few orbits.
Also, such a formula is not hindered by most undesirable features of the Jacobian of the deterministic part, such as singularity or non-hyperbolicity.

Our main theorem is about the random dynamical system $X_{n+1}=f(X_n)+Y_{n+1}$, where $h_n$ and $p_n$ are the probability density for $X_n$ and $Y_n$, and we know $h_0$ and all $p_n$'s.
The perturbation $\delta(\cdot):=\partial (\cdot)/ \partial\gamma|_{\gamma=0}$.
$\Phi$ is a fixed $C^2$ observable function.
Then we prove the following formula, which is (closely related to) a long-known wisdom. 
But we find it a novel usage, that is, we can Monte-Carlo sample it by fewest samples; in particular, one orbit can provide information for many different decorrelation step $m$'s.

\begin{restatable} [No-propagate formula for $\delta h_T$]{theorem}{goldbach}
\label{t:finiteT}
Under the assumptions of \cref{l:niu},
\[ \begin{split}
  \delta \int \Phi(x_T) h_T(x_T) dx_T
  = - \int \Phi(x_T) \sum_{m=0} ^ {T-1} \left( \delta f_\gamma x_{m} \cdot \frac {dp}{p} (y_{m+1}) \right) h_0(x_0) dx_0 (pdy)_{1\sim T}.
\end{split} \]  
Here $dp$ is the differential of $p$, $(pdy)_{1\sim T}:=p(y_1)dy_1\cdots p(y_T)dy_T$ is the independent distribution of $Y_n$'s, and $x_m$ is a function of dummy variables $x_0, y_1, \cdots, y_m$.
\end{restatable}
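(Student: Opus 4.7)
The plan is to transfer the $\gamma$-dependence off the trajectory and onto the noise density, then differentiate a density instead of propagating a variation. First I would expand the expectation using the Markov structure: since $X_T$ is a deterministic function of the independent variables $X_0,Y_1,\ldots,Y_T$,
\[
  \int \Phi(x_T)\, h_T(x_T)\, dx_T \;=\; \int \Phi\bigl(x_T(x_0,y_1,\ldots,y_T)\bigr)\, h_0(x_0)\, dx_0\, (pdy)_{1\sim T},
\]
where $x_T$ is built by the recursion $x_n=f_\gamma(x_{n-1})+y_n$. Then I would perform the volume-preserving change of variables $(x_0,y_1,\ldots,y_T)\leftrightarrow (x_0,x_1,\ldots,x_T)$ given by $y_n = x_n - f_\gamma(x_{n-1})$; the map is triangular with unit Jacobian, so the integral rewrites as
\[
  \int \Phi(x_T)\, h_0(x_0) \prod_{n=1}^{T} p\!\left(x_n - f_\gamma(x_{n-1})\right) dx_0\, dx_1\cdots dx_T.
\]
All $\gamma$-dependence now sits inside the $T$ noise density factors, and none inside $\Phi$.

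Next I would differentiate under the integral at $\gamma=0$. By the chain rule applied to the $n$-th factor,
\[
  \delta\, p\!\left(x_n - f_\gamma(x_{n-1})\right) \;=\; -\, dp(y_n)\cdot \delta f_\gamma(x_{n-1}),
\]
and factoring out one copy of $p(y_n)$ produces the logarithmic-derivative form $\tfrac{dp}{p}(y_n)$. Summing over $n$ via the product rule gives
\[
  \delta \int \Phi\, h_T\, dx_T \;=\; -\int \Phi(x_T)\, h_0(x_0) \sum_{n=1}^{T}\!\Bigl(\delta f_\gamma(x_{n-1})\cdot \tfrac{dp}{p}(y_n)\Bigr) \prod_{k=1}^{T} p(y_k)\, dx_0\, dx_1\cdots dx_T.
\]
Reverting the change of variables (again unit Jacobian) and reindexing $m=n-1$ so the sum runs $m=0,\ldots,T-1$ yields the claimed formula verbatim.

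The only real obstacle is justifying the exchange of $\partial_\gamma$ with the integral; everything else is bookkeeping around a trivial Jacobian. Under the smoothness/integrability hypotheses collected in \cref{l:niu}, together with the $C^2$ regularity of $\Phi$, $f_\gamma$ and $p$, one obtains an integrable majorant for both the integrand and its $\gamma$-derivative uniformly in a neighborhood of $\gamma=0$, so dominated convergence applies. I would expect the bulk of the technical care (finite-moment conditions on $\delta f_\gamma$, tail decay of $p$, boundedness of $\tfrac{dp}{p}$ against the measure $p\,dy$) to be exactly what \cref{l:niu} is designed to supply. Once the differentiation under the integral is licensed, the remaining content of the theorem is the structural observation — that the unit-Jacobian change of variables converts a derivative along the trajectory into a derivative of the noise density — which is precisely the no-propagate mechanism the paper is advertising.
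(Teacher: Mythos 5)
Your proof is correct, but it is organized quite differently from the paper's. The paper first expands $\delta h_T$ via the operator-level Leibniz rule $\delta (L_p L_{f_\gamma})^T h_0 = \sum_{n} (L_pL_f)^n\,\delta(L_pL_{f_\gamma})\,(L_pL_f)^{T-1-n}h_0$, and then handles each term separately by sandwiching a single application of \cref{l:niu} (which localizes the derivative onto one factor of $dp/p$) between repeated applications of \cref{l:PhiMC} (which unfold the remaining operator powers into path integrals). You instead write the whole expectation as one path-space integral up front, use the unit-Jacobian triangular change of variables $(x_0,y_1,\dots,y_T)\leftrightarrow(x_0,x_1,\dots,x_T)$ to push all $\gamma$-dependence into the product of noise densities $\prod_n p(x_n - f_\gamma(x_{n-1}))$, and then apply the ordinary product rule — a likelihood-ratio/score-function argument on the path measure. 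The two computations are the same at the level of what gets differentiated (only $p$, never the trajectory), and your version is arguably more transparent as a one-shot derivation; what the paper's modular decomposition buys is reusability: \cref{l:PhiMC} and \cref{l:niu} are invoked again essentially verbatim for the infinite-time orbitwise formula (\cref{l:dh}) and for the generalizations where $p$ depends on $z$ and $\gamma$, whereas your global change of variables would have to be redone in each setting. One small caveat: your step of ``reverting the change of variables'' should be read as being performed at $\gamma=0$ after differentiation, so that $x_m$ in the final formula is the function of $x_0,y_1,\dots,y_m$ defined by the unperturbed recursion — this is consistent with the theorem statement but worth saying explicitly. Your remark on exchanging $\partial_\gamma$ with the integral is at the same level of rigor as the paper, which likewise takes \cref{e:xu} as given rather than proving an integrable majorant.
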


Then we extend this basic theorem to several related cases, which are implemented by the numerical algorithms in this paper.
\Cref{l:dhTn} is for the time-inhomogeneous case, where the phase space is different for different time steps.
In \cref{l:dh} is for the long-time case, so we no longer need to sample the initial probability $h_0$.

\Cref{s:intui} gives a pictorial intuition for the main lemmas.
With this intuition we can see, without being hindered by the complexity from notations, how to extend to some other cases, which shall be useful in later papers.
\Cref{s:generalize} extends to cases where the additive noise depends on locations and parameters; we also generalize to random dynamical systems with smooth densities.

The second part of our paper is about numerical realizations of our formulas.
Section~\ref{s:algorithm} gives a detailed list for the algorithm.
Section \ref{s:examples} illustrates the algorithm on several numerical examples.
In particular, we run on an example where the deterministic part does not have a linear response, but adding noise and using the no-propagate algorithm give a reasonable reflection of the observable-parameter trend.
We shall also run on a unstable neural network with 51 layers $\times$ 9 neurons.

Section~\ref{s:discuss} discusses the relation to and how to transit to deterministic cases.
In particular, we propose a program which combines the three famous formulas for linear responses, i.e. the ensemble formula, operator formula, and the random formula, to obtain a perhaps universal algorithm to compute the best approximation of linear responses.
The main idea is to add local but large-in-amplitude noise only to non-hyperbolic regions.
This trinity still requires non-trivial techniques, which we shall defer to later papers.
This current paper focus more on the random formula, whereas the fusion of the ensemble and operator formula has been done in our fast response algorithms \cite{fr,far,TrsfOprt}.
\Cref{s:kernel} compares no-propagate algorithm with some seemingly similar algorithms.

\section{Deriving the no-propagate linear response formula}
\label{s:derive}

\subsection{Preparation}
\hfill\vspace{0.1in}
\label{s:prep}

Let $f_\gamma$ be a family of $C^2$ map on the Euclidean space $\R^M$ of dimension $M$ parameterized by $\gamma$.
Assume that $\gamma \mapsto \tf_\gamma$ is $C^1$ from $\R$ to the space of $C^2$ maps.
The random dynamical system in this paper is given by 
\begin{equation} \begin{split} \label{e:dynamic}
  X_n =  f_\gamma (X_{n-1}) + Y_n ,
  \quad \textnormal{where} \quad 
  Y_n \iid  p.
\end{split} \end{equation}
The default value is $\gamma=0$, and we denote $f:=f_{\gamma=0}$.
Here $p$ is any fixed $C^2$ probability density function.
For convenience, here we arrange $f$ before adding noise, but the result is equivalent should we write the expression of our dynamics as 
or $ Z_n = f_\gamma (Z_{n-1}+Y_{n-1}) $.

Moreover, except for \cref{l:dh}, our results also apply to time-inhomogeneous cases, where $f$ and $p$ are different for each step.
More specifically, the dynamic is given by
\begin{equation} \begin{split} \label{e:dynamicinhomo}
  X_n =  f_{\gamma,n-1}(X_{n-1}) + Y_n ,
  \quad \textnormal{where} \quad 
  Y_n \sim  p_n.
\end{split} \end{equation}
Here $X_n\in \R^{M_n}$ are not necessarily of the same dimension.
We shall exhibit the time dependence in \cref{l:dhTn} and \cref{s:dhT}.
On the other hand, for the infinite time case in \cref{l:dh}, we want to sample by only one orbit; this requires that $f$ and $p$ be repetitive among steps.

We define $h_T$ as the density of pushing-forward the initial measure $h_0$ for $T$ steps.
\[ \begin{split}
  h_T:=  ( L_p L_{f_\gamma})^T h_0.
\end{split} \]
$h_T=h_{T,\gamma}$ depends on $\gamma$ and also $h_0$.
Here $L_f$ is the measure transfer operator of $f$,
which are defined by the integral equality
\begin{equation} \begin{split} \label{e:Lh}
  \int \Phi(x) \, (L_f h)(x) dx
  := 
  \int \Phi(fx)  h(x) dx.
\end{split} \end{equation}
Here $\Phi\in C^2(\R^M)$ is any fixed observable function.
When $f$ is bijective, there is an equivalent pointwise definition of $L_f h$,
$
  (L_f h) (x) := \frac h { |Df| }(f^{-1}x),
$
but this paper does not use this pointwise definition.

On the other hand, $L_p h'$ is pointwisely defined by convolution with density $p$:
\begin{equation} \begin{split} \label{e:Lppt}
  L_p h' (x) 
  = \int h'(x-y) p(y) dy
  = \int h'(z) p(x-z) dz.
\end{split} \end{equation}
We shall also use the integral equality
\begin{equation} \begin{split} \label{e:Lpint}
  \int \Phi (x) L_p h' (x) dx
  = \int p(y) \left( \int \Phi(x) h'(x-y)  dx \right) dy
  \,\overset{z=x-y}{=}\,
  \iint \Phi(y+z) h'(z) p(y) dz dy.
\end{split} \end{equation}
If we want to compute the above integration by Monte-Carlo method, then we should generate i.i.d. $Y_l=y_l$ and $Z_l=z_l$ according to density $p$ and $h'$, then compute the average of $\Phi(y_l+z_l)$.

The linear response formula for finite time $T$ is an expression of $\delta h_{\gamma,T}$ by $\delta f_\gamma$, and 
\[ \begin{split}
  \delta(\cdot)
  :=\left. \frac {\partial (\cdot)}{\partial \gamma} \right|_{\gamma=0}.
\end{split} \]
Here $\delta$ may as well be regarded as small perturbations.
Note that by our notation,
\[ \begin{split}
  \delta f_\gamma(x) \in T_{fx} \R^M;
\end{split} \]
that is, $\delta f_\gamma(x)$ is a vector at $fx$.
The linear response formula for finite-time is given by the Leibniz rule,
\begin{equation} \begin{split} \label{e:xu}
  \delta h_T
  = \delta (L_p L_{f_\gamma} )^T h_0
  = \sum_{n=0}^{T-1} ( L_p L_f )^{n} \, \delta(L_p L_{f_\gamma} ) \, (L_p L_f )^{T-1-n} h_0.
\end{split} \end{equation}
We shall give an expression of the perturbative transfer operator $\delta (L_p  L_{f_\gamma})$ later.

We make two assumptions for the case where $T\rightarrow\infty$.
First, we assume the existence of the density $h$ of the (ergodic) physical measure, which is the unique limit of $h_T$ in the weak$*$ topology:
\[ \begin{split}
  \lim_{T\in\N,\, T\rightarrow \infty} (L_p L_{f_\gamma})^T h_{0} = h,
\end{split} \]
note that $h$ depends on $\gamma$ but not on $h_0$ if $h_0$ smooth.
We also assume that the linear response for physical measure exists, that is, we can substitute the limit $ (L_p L_f )^{T-n} h_0 = h$ into \cref{e:xu} to get:
\begin{equation} \begin{split} \label{e:lininf}
  \delta h
  = \sum_{n\ge0} (L_f L_p)^{n} \delta(L_p L_{f_\gamma} ) h.
\end{split} \end{equation}

Our assumptions on the unique existence of $h$ and $\delta h$ can be proved under various assumptions; the formula is the same regardless of the assumptions.
For us, the most prominent assumptions are:
(1) the noise has smooth density;
(2) topological transitivity;
(3) the compactness of the phase space.
The last assumption may be replaced by assuming the existence of compact attractors which confines/concentrates the random walk.
These assumptions are typically much more forgiving than deterministic cases: in particular, we do not need hyperbolicity from $f$, which is typically untrue for real-life systems.

This paper does not attempt to prove the existence of $h$ and $\delta h$;
rather, we seek to compute these on one or a few orbits.

\subsection{Monte Carlo expression for measure perturbations}
\hfill\vspace{0.1in}
\label{s:MCexpression}

First, we write out the expression of $(L_p L_{f_\gamma})^n h_0$ and $\delta  L_p L_{f_\gamma}h_0$.
Then we use these to get the Monte-Carlo expression of linear responses.

\begin{lemma} \label{l:PhiMC}
For any $C^2$ (not necessarily positive) densities $h_0$ and $p$, any $C^2$ map $f$, any $n\in \N$,
  \[ \begin{split}
    \int \Phi(x_n) ((L_pL_f)^n h_0) (x_n) dx_n
  = \int \Phi(x_n) h_0(x_0) dx_0 p(y_1) dy_1 \cdots p(y_n) dy_n.
  \end{split} \]
Here the $x_n$ on the left is a dummy variable; whereas $x_n$ on the right is recursively defined by $x_m=f(x_{m-1})+y_m$, so $x_n$ is a function of the dummy variables $x_{0}, y_{1}, \cdots, y_{n}$.
\end{lemma}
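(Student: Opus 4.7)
I would prove this by induction on $n$, using the two integral identities \eqref{e:Lh} and \eqref{e:Lpint} to peel off one application of $L_p L_f$ at each step.

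The base case $n=0$ is immediate: both sides reduce to $\int \Phi(x_0) h_0(x_0)\, dx_0$. For the inductive step, I would assume the identity holds for $n-1$ with \emph{any} $C^2$ observable in place of $\Phi$ (so the induction hypothesis is really a family of statements indexed by the test function). Writing $h_{n-1} := (L_p L_f)^{n-1} h_0$, I would factor $(L_p L_f)^n h_0 = L_p(L_f h_{n-1})$ and apply \eqref{e:Lpint} with $h' = L_f h_{n-1}$ to get
\[
  \int \Phi(x_n)\, ((L_p L_f)^n h_0)(x_n)\, dx_n
  = \iint \Phi(y_n + z)\, (L_f h_{n-1})(z)\, p(y_n)\, dz\, dy_n.
\]
Then \eqref{e:Lh}, applied to the inner integral with the observable $z \mapsto \Phi(y_n + z)$ at each fixed $y_n$, converts this to
\[
  \iint \Phi(y_n + f(w))\, h_{n-1}(w)\, p(y_n)\, dw\, dy_n.
\]

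Now I would invoke the induction hypothesis with the test function $\tilde\Phi(w) := \int \Phi(f(w) + y_n)\, p(y_n)\, dy_n$, which is $C^2$ by smoothness of $\Phi$, $f$, and $p$. This gives
\[
  \int \tilde\Phi(x_{n-1})\, h_0(x_0)\, dx_0\, p(y_1) dy_1 \cdots p(y_{n-1}) dy_{n-1},
\]
where on the right side $x_{n-1}$ is the recursion $x_m = f(x_{m-1}) + y_m$ through step $n-1$. Unfolding the definition of $\tilde\Phi$ and using $x_n = f(x_{n-1}) + y_n$ produces exactly the claimed right-hand side with all $n$ noise variables.

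The argument is essentially a bookkeeping exercise, so the only real care point is making sure the induction hypothesis is phrased uniformly in the observable, so that it can be re-applied to the new test function $\tilde\Phi$ at each stage; beyond that, there is no substantive obstacle, and the smoothness hypotheses on $h_0$, $p$, $f$ are more than enough to justify the iterated integrals and the interchange of order of integration implicit in \eqref{e:Lpint}.
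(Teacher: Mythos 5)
Your proposal is correct and is essentially the paper's own argument: the paper likewise peels off the outermost $L_pL_f$ via the integral identities \eqref{e:Lpint} and \eqref{e:Lh} and then iterates (its equation \eqref{e:year}), which is exactly your induction with the hypothesis quantified over test functions. The only cosmetic difference is that you package the $y_n$-integral into a new observable $\tilde\Phi$ at each stage, while the paper keeps all the $y$-variables as explicit outer dummy variables; the content is identical.
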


\begin{remark*}
  The integrated formula on the right prescribes how to compute the left integration by Monte-Carlo method.
  That is, for each $l$, we generate random $x_{l,0}$ according to density $h_0$, and $y_{l,1}, \cdots, y_{l,n}$ i.i.d according to $p$.
  Then we compute $\Phi(x_n)$ for this particular sample of $ \left\{x_{l,0}, y_{l,1}, \cdots, y_{l,n} \right\} $.
  Note that the experiments for different $l$ should be either independent or decorrelated.
  Then the Monte-Carlo integration is simply
\[ \begin{split}
  \int \Phi(x_n) ((L_pL_f)^n h_0) (x_n) dx_n
  = \lim_{L\rightarrow\infty} \frac 1L \sum_{l=1}^L \Phi(x_{l,n}).
\end{split} \]
Almost surely according to $h_0\times p_1\times\cdots\times p_n$.
  In this paper we shall use $n,m$ to indicate time steps, whereas $l$ labels different samples.
\end{remark*}

\begin{proof}
Sequentially apply the definition of $L_p$ and $L_f$, we get
\begin{equation} \begin{split} \label{e:year}
    \int \Phi(x_n) ((L_pL_f)^n h_0) (x_n) dx_n \\
    = \iint \Phi(y_n+z_n) (L_f(L_pL_f)^{n-1} h_0) (z_n) p(y_n) dy_n dz_n \\
    = \iint \Phi(x_n(x_{n-1},y_n)) ((L_pL_f)^{n-1} h_0) (x_{n-1}) p(y_n) dy_n dx_{n-1}.
\end{split} \end{equation}
  Here $x_n:=y_n+f(x_{n-1})$ in the last expression is a function of the dummy variables $x_{n-1}$ and $y_n$.
  Roughly speaking, the dummy variable $z_n$ in the second expressions is $f(x_{n-1})$.
  
  Recursively apply \cref{e:year} once, we have 
  \[ \begin{split}
    \int \left(\int \Phi(x_n(x_{n-1},y_n)) ((L_pL_f)^{n-1} h_0) (x_{n-1}) dx_{n-1} \right) p(y_n) dy_n\\
    =
    \int \Phi(x_n(x_{n-1}(x_{n-2},y_{n-1}),y_n)) ((L_pL_f)^{n-2} h_0) (x_{n-2}) p(y_{n-1})p(y_n) dy_{n-1} dy_n dx_{n-2}.
  \end{split} \]
  Here $x_{n-1}:=y_{n-1}+f(x_{n-2})$, so $x_n$ is a function of the dummy variables $x_{n-2},y_{n-1}$, and $y_n$.
  Keep applying \cref{e:year} recursively, we get
\begin{equation} \begin{split}
    \int \Phi(x_n) ((L_pL_f)^n h_0) (x_n) dx_n \\
    = \int \Phi(x_n)  h_0 (x_{0}) p(y_1)\cdots p(y_n) dy_1\cdots dy_n dx_{0},
\end{split} \end{equation}
where $x_n$ is a function as stated in the lemma. 
\end{proof}

Then we give formulas for perturbations.
We first give a pointwise formula for $\delta (L_p L_{f_\gamma} h) (x)$, which seems to be a long-known wisdom.
An intuitive explanation of the following lemma is given in \cref{s:intui}.

\begin{lemma} \label{l:delta}
For any $C^2$ densities $h$ and $p$, any $C^2$ map $f$, 
if $\gamma \mapsto f_\gamma$ is $C^1$ from $\R$ to the space of $C^2$ maps,
then
\[ \begin{split}
  \delta (L_p L_{f_\gamma} h) (x_1)
  = \int -  \delta f_\gamma (x_0)\cdot dp(x_1 - f x_0) \, h(x_0) dx_0.
\end{split} \]
Here $\delta:=\partial/\partial \gamma|_{\gamma=0}$, 
and $\delta f_\gamma (x_0) \cdot dp(x_1 - f x_0)$  is the derivative of the function $p(\cdot)$ at $x_1-fx_0$ in the direction $\delta f_\gamma (x_0)$, which is a vector at $fx_0$.
\end{lemma}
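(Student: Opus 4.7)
The plan is first to derive a pointwise, convolution-style formula for $L_p L_{f_\gamma} h$, then differentiate under the integral and apply the chain rule.

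For the first step, I would combine the integral definition \eqref{e:Lh} of $L_{f_\gamma}$ with the pointwise convolution formula \eqref{e:Lppt} for $L_p$. Concretely, for any test $\Phi\in C^2$, applying \eqref{e:Lpint} with $h' = L_{f_\gamma} h$ and then \eqref{e:Lh} gives
\[
\int \Phi(x_1)\,(L_p L_{f_\gamma} h)(x_1)\,dx_1
= \iint \Phi(f_\gamma(x_0)+y)\, p(y)\, h(x_0)\, dy\, dx_0.
\]
Changing variables $x_1 = f_\gamma(x_0)+y$ in the inner integral (for fixed $x_0$, this is a pure translation with Jacobian $1$) and using that $\Phi$ is arbitrary yields the pointwise identity
\[
(L_p L_{f_\gamma} h)(x_1) = \int p\bigl(x_1 - f_\gamma(x_0)\bigr)\, h(x_0)\, dx_0.
\]

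For the second step, I would differentiate both sides in $\gamma$ and pass $\partial_\gamma$ through the integral. The differentiation under the integral sign is the only analytic point that needs care: it is justified by the $C^1$-dependence $\gamma \mapsto f_\gamma$, the $C^2$ smoothness (hence boundedness of $dp$ on the relevant set) of $p$, and compactness of the integration region inherited from the support assumptions on $h$ (or the integrability conditions implicit in \cref{l:niu}). Finally, the chain rule gives
\[
\delta\bigl[\, p(x_1 - f_\gamma(x_0))\,\bigr]
= dp(x_1 - f x_0)\cdot \bigl(-\delta f_\gamma(x_0)\bigr)
= -\,\delta f_\gamma(x_0)\cdot dp(x_1 - f x_0),
\]
since at $\gamma=0$ we have $f_\gamma = f$ and $\partial_\gamma[x_1 - f_\gamma(x_0)] = -\delta f_\gamma(x_0)$, which is a vector at $f x_0$ in the direction of which $p$ is differentiated. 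Substituting this into the pointwise formula yields exactly the claim.

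\textbf{Main obstacle.} The computation itself is short; the only delicate point is the exchange of $\delta$ and the $x_0$-integral, which requires a uniform-in-$\gamma$ dominating function for $\partial_\gamma [p(x_1 - f_\gamma(x_0))]\,h(x_0)$. Under the standing smoothness hypotheses on $f_\gamma$, $p$, and $h$, together with whatever tail/compactness assumption is packaged into \cref{l:niu}, this is routine; I would simply invoke the dominated convergence theorem with the bound $\|dp\|_\infty \sup_{\gamma}\|\delta f_\gamma\|_\infty \, |h|$, integrable in $x_0$ by the assumption on $h$.
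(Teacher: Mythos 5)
Your proof is correct and follows essentially the same route as the paper: both derive the pointwise kernel formula $(L_p L_{f_\gamma} h)(x_1) = \int p(x_1 - f_\gamma x_0)\, h(x_0)\, dx_0$ (the paper gets it slightly more directly by substituting $p(x_1-\cdot)$ as the test function in the integral definition of $L_{f_\gamma}$, rather than via an arbitrary $\Phi$ and a change of variables) and then differentiate under the integral using the chain rule. Your added attention to the domination needed to exchange $\delta$ with the integral is a point the paper passes over silently.
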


\begin{remark*}[]
Note that we do not compute $\delta L_{f_\gamma} $ separately.
In fact, the main point here is that the convolution with $p$ allows us to differentiate $p$, which is typically much more forgiving than differentiating only $L_{f_\gamma} $.
Although there is a formula for $\delta L_{f_\gamma}$, $\delta L_{f_\gamma} h = \div (h \delta f_\gamma \circ f^{-1})$, but the derivative of $h$ is still unknown.
Moreover, if we quench a specific noise sequence $ \left\{  Y_n=y_n\right\}_{n\ge 0}$, we can define quenched stable and unstable subspaces for this noise sample; then we can give an equivariant divergence formula for the unstable $\delta L_{f_\gamma} h$, where everything is known and can be computed recursively \cite{TrsfOprt}.
However, the equivariant divergence formula requires hyperbolicity, which can be too strict to real-life systems.
\end{remark*}

\begin{proof}
First write a pointwise expression for $L_p L_{f_\gamma} h$: by definition of $L_p$ in \cref{e:Lppt},
\[ \begin{split}
  (L_p L_{f_\gamma} h) (x_1)
  = \int (L_{f_\gamma} h) (z_1) p(x_1-z_1) dz_1.
\end{split} \]
Since $p\in C^2(\R^M)$, we can substitute $p$ into $\Phi$ in the definition of $L_f$ in \cref{e:Lh}, to get
\[ \begin{split}
  = \int  h(x_0) p(x_1- f_\gamma x_0) dx_0.
\end{split} \]
Differentiate with respect to $\gamma$, we have
\[ \begin{split}
  \delta (L_p L_{f_\gamma} h) (x_1)
  = \int - h(x_0) \delta f_\gamma (x_0) \cdot dp(x_1 - f x_0) dx_0.
\end{split} \]
\end{proof} 

Then we give the integral version of \cref{l:delta} which can be computed by Monte Carlo algorithms.
An intuition of the following lemma is given in \cref{s:intui}.

\begin{lemma} \label{l:niu}
Under assumptions of \cref{l:delta}, also fix a bounded $C^2$ observable function $\Phi$, then
\[ \begin{split}
  \int \Phi(x_1) \delta (L_p L_{f_\gamma} h) (x_1) dx_1
  = \iint - \Phi(x_1) \delta f_\gamma (x_0) \cdot \frac {dp(y_1)}{p(y_1)} \, h(x_0) dx_0 \, p(y_1) dy_1.
\end{split} \]
Here $x_1$ in the right expression is a function of the dummy variables $x_0$ and $y_1$, that is, $x_1=y_1+fx_0$.
\end{lemma}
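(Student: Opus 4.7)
The plan is to deduce this as a direct consequence of \cref{l:delta} by integrating against $\Phi$, applying Fubini's theorem to exchange the order of integration, and performing a change of variables so that the integration against $dp$ gets re-weighted to an integration against the probability density $p$.

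First I would start from the pointwise identity provided by \cref{l:delta}, multiply by $\Phi(x_1)$, and integrate in $x_1$:
\[
\int \Phi(x_1) \delta (L_p L_{f_\gamma} h)(x_1)\, dx_1
= -\iint \Phi(x_1)\, \delta f_\gamma(x_0) \cdot dp(x_1 - f x_0)\, h(x_0)\, dx_0\, dx_1.
\]
To justify swapping the order of integration I would appeal to Fubini: the observable $\Phi$ is bounded, $h$ is an integrable $C^2$ density, $\delta f_\gamma$ is continuous (hence bounded on the relevant support, at least locally; one may assume compact support or sufficient decay to make this rigorous), and $dp$ is continuous and integrable since $p\in C^2$. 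Thus the double integral is absolutely convergent and the iterated integrals agree.

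Next I would perform the change of variables $y_1 := x_1 - f x_0$ in the inner integral (with $x_0$ held fixed), so $x_1 = y_1 + f x_0$ and $dx_1 = dy_1$. This yields
\[
-\iint \Phi(y_1 + f x_0)\, \delta f_\gamma(x_0) \cdot dp(y_1)\, h(x_0)\, dx_0\, dy_1.
\]
Finally I would use the trick of writing $dp(y_1) = \frac{dp(y_1)}{p(y_1)}\, p(y_1)$, valid wherever $p(y_1)>0$ (which one may assume holds on the support of the noise, and which matches the Monte-Carlo intuition of sampling $y_1\sim p$). This gives
\[
-\iint \Phi(x_1)\, \delta f_\gamma(x_0) \cdot \frac{dp(y_1)}{p(y_1)}\, h(x_0)\, dx_0\, p(y_1)\, dy_1,
\]
with $x_1 = y_1 + f x_0$, which is exactly the claimed formula.

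There is no real obstacle in this argument; it is essentially bookkeeping on top of \cref{l:delta}. The only delicate point is ensuring that the $\tfrac{dp}{p}$ reweighting is well-defined, i.e., that the integrand is handled on $\{p>0\}$ and that Fubini applies. Under the stated $C^2$ hypotheses, together with the mild tacit assumption that $\delta f_\gamma$ and $h$ produce an integrable product with $|dp|$ (which is automatic if, e.g., $h$ has compact support or $p$ has integrable gradient), both are routine. The conceptual content of the lemma is not the calculation but the reinterpretation of the right-hand side as an expectation of a score-function-weighted observable, which is what makes Monte-Carlo sampling along orbits possible in later results.
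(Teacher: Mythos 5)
Your proposal is correct and follows essentially the same route as the paper's own proof: substitute \cref{l:delta}, exchange the order of integration, change variables to $y_1 = x_1 - fx_0$, and rewrite $dp(y_1)$ as $\frac{dp}{p}(y_1)\,p(y_1)$. Your additional remarks on Fubini and on restricting to $\{p>0\}$ are reasonable housekeeping that the paper leaves implicit.
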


\begin{remark*}
  The point is, if we want to integrate the right expression by Monte-Carlo, just generate random pairs of $ \left\{x_{l,0}, y_{l,1}\right\} $, then compute $x_{l,1}$ and $I_l := \Phi(x_1)\delta f_\gamma x_{l,0} \cdot \frac {dp}{p}( y_{l,1})$, and then average over many $I_l$'s.
\end{remark*} 

\begin{proof}
  Substitute \cref{l:delta} into the integration, we get
\[ \begin{split}
  \int \Phi(x_1) \delta (L_p L_{f_\gamma} h) (x_1) dx_1
  = - \iint \Phi(x_1) \delta f_\gamma (x_0) \cdot dp(x_1 - f x_0) \, h(x_0) dx_0 dx_1
\end{split} \]
The problem with this expression is that, should we want Monte-Carlo, it is not obvious which measure we should generate $x_1$'s according to.
To solve this issue, change the order of the double integration to get
\[ \begin{split}
  = - \int \delta f_\gamma(x_0)\cdot \left( \int \Phi(x_1) dp(x_1 - f x_0)  dx_1 \right) h(x_0) dx_0.
\end{split} \]
Change the dummy variable of the inner integration from $x_1$ to $y_1=x_1-fx_0$, 
\[ \begin{split}
  = - \int \delta f_\gamma(x_0)\cdot \left( \int \Phi(y_1+fx_0) dp(y_1) dy_1 \right) h(x_0) dx_0 \\
  = - \iint \Phi(x_1(x_0,y_1))\, \delta f_\gamma(x_0)\cdot \frac {dp}p (y_1) \, p(y_1) dy_1 h(x_0) dx_0.
\end{split} \]
Here $x_1$ is a function as stated in the lemma.
\end{proof}

Then we combine \cref{l:niu} and \cref{l:PhiMC} to get integrated formulas for perturbations of $ h_T:=  ( L_p L_{f_\gamma})^T h_0$, which can be sampled by Monte-Carlo type algorithms.
Note that here $h_0$ is fixed and does not depend on $\gamma$.

\goldbach*

\begin{remark*}
  We shall explain in detail how to Monte-Carlo integrate the right expression in the next section.
\end{remark*}

\begin{proof}
 Note that $\Phi$ is fixed, use \cref{e:xu}, and let $m=T-n-1$, we get
\[ \begin{split}
  \delta \int \Phi(x_T) h_T(x_T) dx_T
  = \int \Phi(x_T) (\delta h_T) (x_T) dx_T \\
  = \int \Phi(x_T) \sum_{m=0}^{T-1} \left( ( L_p L_f )^{T-m-1} \, \delta(L_p L_{f_\gamma} ) \, (L_p L_f )^{m} h_0 \right) (x_T) dx_T.
\end{split} \]
   
For each $m$, first apply \cref{l:PhiMC} several times,
\[ \begin{split}
  \int \Phi(x_T) \left( ( L_p L_f )^{T-m-1} \, \delta(L_p L_{f_\gamma} ) \, (L_p L_f )^{m} h_0 \right) (x_T) dx_T \\
  = \int \Phi(x_T) \left( \delta(L_p L_{f_\gamma} ) \, (L_p L_f )^{m} h_0 \right) (x_{m+1}) \, dx_{m+1} (pdy)_{m+2\sim T} 
\end{split} \]
Here $x_T$ 
is a function of $x_{m+1}, y_{m+2}, \cdots, y_T$.
Then apply \cref{l:niu} once,
\[ \begin{split}
  = - \int \Phi(x_T) \delta f_\gamma x_m \cdot \frac {dp}p (y_{m+1}) \left( (L_p L_f )^{m} h_0 \right) (x_{m}) \, dx_{m} (pdy)_{m+1\sim T} 
\end{split} \]
Now $x_T$ is a function of $x_{m}, y_{m+1}, \cdots, y_T$.
Then apply \cref{l:PhiMC} several times again, 
\[ \begin{split}
  = - \int \Phi(x_T) \delta f_\gamma x_m \cdot \frac {dp}p (y_{m+1}) h_0 (x_0) \, dx_0 (pdy)_{1\sim T} 
\end{split} \]
Then sum over $m$ to prove the theorem.
\end{proof}

Note that we can subtract any constant from $\Phi$ and does not change the linear response: this is straightforward to understand from a functional point of view, but can also be proved from a more concrete \cref{l:int0}.
Hence, we can centralize $\Phi$, i.e. replacing $\Phi(\cdot)$ by $\Phi(\cdot)-\Phi_{avg,T}$, where the constant
\[ \begin{split}
  \Phi_{avg,T} := \int \Phi(x_T) h_T(x_T) dx_T.
\end{split} \]
We sometimes centralize by subtracting $\Phi_{avg}:= \int \Phi h dx$.
The centralization reduces the amplitude of the integrand, so the Monte-Carlo method converges faster.

\begin{lemma} \label{l:int0}
For any $m\ge0$, if $\lim_{y\rightarrow\infty}p(y)=0$ and  $p$ is $C^1$, then
\[ \begin{split}
  \int  \delta f_\gamma x_{m} \cdot \frac {dp}{p} (y_{m+1})  h_0(x_0) dx_0 (pdy)_{1\sim m+1}
  =
  0.
\end{split} \] 
\end{lemma}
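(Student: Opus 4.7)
The plan is to exploit the fact that the integrand factors nicely with respect to the final noise variable $y_{m+1}$, and then reduce to the elementary identity $\int dp(y)\,dy = 0$ coming from the decay of $p$ at infinity.

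First I would note that $x_m$, being defined recursively via $x_k = f(x_{k-1}) + y_k$ for $k \le m$, is a function of the dummy variables $x_0, y_1,\ldots, y_m$ only, and in particular does \emph{not} depend on $y_{m+1}$. Likewise $\delta f_\gamma x_m$ is independent of $y_{m+1}$. Therefore, by Fubini, I can isolate the integral over $y_{m+1}$ from the rest:
\[
\int \delta f_\gamma x_m \cdot \frac{dp}{p}(y_{m+1}) h_0(x_0)\, dx_0\, (pdy)_{1\sim m+1}
= \int \left( \delta f_\gamma x_m \cdot I \right) h_0(x_0) dx_0 (pdy)_{1\sim m},
\]
where
\[
I := \int \frac{dp}{p}(y_{m+1})\, p(y_{m+1})\, dy_{m+1} = \int dp(y_{m+1})\, dy_{m+1}.
\]
The cancellation of $p(y_{m+1})$ against the denominator is the key algebraic step and is precisely what the $dp/p$ shorthand was designed for.

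Next, I would argue that $I = 0$ componentwise. Writing $dp = \nabla p$, for each coordinate direction $e_i$ the integral $\int \partial_i p(y)\, dy$ is zero: integrate first in $y_i$ using the fundamental theorem of calculus, obtaining the evaluation of $p$ at $y_i = \pm \infty$, which vanishes by the assumption $\lim_{y\to\infty} p(y) = 0$ (and integrability of $p$ in the other directions handles Fubini). Equivalently, this is the divergence theorem applied to the vector field $p e_i$ on $\R^M$ with boundary at infinity.

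Once $I = 0$, the dot product $\delta f_\gamma x_m \cdot I$ vanishes identically, and the outer integral over $(x_0, y_1,\ldots, y_m)$ is of a function that is identically zero, so the full integral is zero. There is no substantive obstacle here beyond the mild technical issue of justifying Fubini and the vanishing of boundary terms at infinity; both are immediate from $p \in C^1$ with $p \to 0$ at infinity and the boundedness assumptions implicit in the setup (so that $\delta f_\gamma x_m \cdot dp(y_{m+1})$ is integrable against the product measure).
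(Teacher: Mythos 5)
Your proposal is correct and follows essentially the same route as the paper's proof: factor out the $y_{m+1}$ integral (since $x_m$ does not depend on $y_{m+1}$), cancel $p$ against the denominator, and conclude $\int dp(y)\,dy = 0$ from the decay of $p$ at infinity. You simply spell out the Fubini and boundary-term justifications that the paper leaves implicit.
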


\begin{proof}
  Just notice that $x_m$ is a function of dummy variables $x_0, y_1, \cdots, y_m$, so we can first integrate
\[ \begin{split}
  \delta f_\gamma x_{m} \cdot \int \frac {dp}{p} (y_{m+1})  p(y_{m+1}) dy_{m+1}
  = \delta f_\gamma x_{m} \cdot \int dp (y_{m+1}) dy_{m+1}
  = 0.
\end{split} \]
Since $p(y)\rightarrow 0$ as $y\rightarrow\infty$.
Here $dp$ is the differential of the function $p$, whereas $dy$ indicates the integration.
\end{proof}

For the convenience of computer coding, we explicitly rewrite this theorem into centralized and time-inhomogeneous form, where  $\Phi$, $f$, and $p$ are different for each step.
This is the setting for many important cases, such as finitely-deep neural networks.
The following proposition can be proved by rerunning our previous proofs, while explicitly involving the time-dependency.
Note that we can reuse a lot of data should we also care about the perturbation of the averaged $\Phi_n$ for other layers $1\le n\le T$.

In the following proposition, let $\Phi_T:\R^{M_T}\rightarrow\R$ be the observable function defined on the last layer of dynamics.
Let $h_T$ be the pushforward measure given by the dynamics in \cref{e:dynamicinhomo}, that is, defined recursively by $h_n:=L_{p_n} L_{f_{\gamma,n-1}} h_{n-1}$;
Let $(pdy)_{1\sim T}:=p_1(y_1)dy_1\cdots p_T(y_T)dy_T$ be the independent but not necessarily identical distribution of $Y_n$'s.

\begin{proposition} [No-propagate formula for $\delta h_T$ of time-inhomogeneous systems] \label{l:dhTn}
If $f_{\gamma,n}$ and $p_n$ satisfy assumptions of \cref{l:niu,l:int0} for all $n$, then
\[ \begin{split}
  &\delta \int \Phi_T(x_T) h_T(x_T) dx_T
  \\
  = &- \int \left(\Phi_T(x_T) - \Phi_{avg,T} \right) \sum_{m=0} ^ {T-1} \left( \delta f_{\gamma,m} x_{m} \cdot \frac {dp}{p} (y_{m+1}) \right) h_0(x_0) dx_0 (pdy)_{1\sim T}.
\end{split} \]  
Here $\Phi_{avg,T} := \int \Phi_T h_T $.
\end{proposition}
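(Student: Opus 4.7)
The plan is to mirror the proof of \cref{t:finiteT} essentially line-by-line, with the only change being that every occurrence of $f$ and $p$ carries a time index, while the centralization piece is handled separately via \cref{l:int0}.

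First I would observe that \cref{l:PhiMC}, \cref{l:delta}, and \cref{l:niu} generalize verbatim to the time-inhomogeneous setting: the proofs never used identical distributions, they only used one application of $L_p$ and one application of $L_{f_\gamma}$ at a time. So for any $m$ and any $C^2$ density $h'$ on $\R^{M_m}$,
\[
\int \Phi_T(x_T)\bigl( (L_{p_T} L_{f_{\gamma,T-1}} \cdots L_{p_{m+2}} L_{f_{\gamma,m+1}}) h'\bigr)(x_{m+1})\,dx_{m+1}
= \int \Phi_T(x_T)\, h'(x_{m+1}) \,(pdy)_{m+2\sim T},
\]
where $x_n$ is built recursively by $x_n = f_{\gamma,n-1}(x_{n-1})+y_n$, and an analogous time-indexed version of \cref{l:niu} holds with $f_\gamma$ replaced by $f_{\gamma,m}$ and $p$ replaced by $p_{m+1}$.

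Next, I would apply the Leibniz rule to $\delta h_T = \delta \bigl( L_{p_T} L_{f_{\gamma,T-1}} \cdots L_{p_1} L_{f_{\gamma,0}} h_0 \bigr)$, which produces a sum over $m \in \{0,\ldots,T-1\}$ of terms where $\delta$ hits the factor $L_{p_{m+1}} L_{f_{\gamma,m}}$. For each such term I would sandwich by \cref{l:PhiMC} (time-inhomogeneous version) on the right of $\delta(L_{p_{m+1}} L_{f_{\gamma,m}})$ to push $\Phi_T$ backward in time to step $m+1$, then invoke the time-inhomogeneous version of \cref{l:niu} at step $m{+}1$ to convert the $\delta$ into a $dp_{m+1}/p_{m+1}$ factor, and finally apply \cref{l:PhiMC} again on the left to expand $(L_{p_m}L_{f_{\gamma,m-1}}\cdots L_{p_1}L_{f_{\gamma,0}}) h_0$ into an integral over $(x_0, y_1, \ldots, y_m)$ against $h_0(x_0)dx_0 (pdy)_{1\sim m}$. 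Summing over $m$ yields exactly the stated formula but with $\Phi_T(x_T)$ in place of $\Phi_T(x_T) - \Phi_{avg,T}$.

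The centralization step is then immediate from \cref{l:int0}: for each $m$, the integrand $\delta f_{\gamma,m} x_m \cdot \frac{dp}{p}(y_{m+1})$ integrates to zero against $h_0(x_0)dx_0\,(pdy)_{1\sim m+1}$, because $x_m$ does not depend on $y_{m+1}$ and $\int dp_{m+1}(y_{m+1})\,dy_{m+1} = 0$ under the decay assumption on $p_{m+1}$. Since the remaining $y_{m+2},\ldots,y_T$ only enter through $x_T$, integrating the constant $\Phi_{avg,T}$ against the rest of the measure produces a multiple of this vanishing integral, so subtracting $\Phi_{avg,T}$ from $\Phi_T(x_T)$ changes nothing.

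The only real obstacle is bookkeeping, keeping careful track of which phase space $\R^{M_n}$ each dummy variable lives in and making sure that the recursive definition $x_n = f_{\gamma,n-1}(x_{n-1})+y_n$ is consistent with the dimension change; but since the derivations of \cref{l:PhiMC,l:delta,l:niu} work pointwise in terms of $x$ and $y$ without ever requiring $M_n$ to be constant, no new analytic input is needed.
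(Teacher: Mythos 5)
Your proposal is correct and matches the paper's own route: the paper explicitly says this proposition ``can be proved by rerunning our previous proofs, while explicitly involving the time-dependency,'' which is precisely your plan of repeating the proof of \cref{t:finiteT} with time-indexed $f_{\gamma,m}$ and $p_{m+1}$, and then disposing of the $\Phi_{avg,T}$ term via \cref{l:int0} exactly as the paper does for centralization. No gaps; the observation that \cref{l:PhiMC,l:delta,l:niu} never used identical distributions or constant dimension is the whole content of the argument.
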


For the perturbation of physical measures (assuming the limit $h\weakliml h_T$ exists), the Monte-Carlo formula for $\delta h$ can further take the form of long-time average on an orbit.
Because $h$ is invariant under the dynamics, so we can apply the ergodic theorem, forget details of $h_0$, and sample $h$ by points from a long orbit: this is the Monte-Carlo method when the background measure $h$ is given by dynamics.
Of course, here we require the dynamics being time-homogeneous ($f$ and $p$ are the same for each step) to invoke ergodic theorems.

\begin{proposition}[No-propagate orbitwise formula for $\delta h$] 
\label{l:dh}
With same assumptions of \cref{l:niu,l:int0}, 
also assume that the physical measure's density $h$  exists for the dynamic $X_{n+1}=f(X_n)+Y_{n+1}$, and $\delta h = \sum_{n\ge0} (L_f L_p)^{n} \delta(L_p L_{f_\gamma} ) h$, then
\[ \begin{split}
  \delta \int \Phi(x) h(x) dx
  \,\overset{\mathrm{a.s.}}{=}\, 
  \lim_{W\rightarrow\infty} \lim_{L\rightarrow\infty} - \frac 1L \sum_{n=0}^W  \sum_{l=1}^L \left( \Phi(X_{n+1+l}) - \Phi_{avg} \right) \, \delta f_\gamma X_l \cdot \frac{dp}p(Y_{1+l})
\end{split} \]  
almost surely according to the measure obtained by starting the dynamic with $X_0, Y_1, Y_2,\cdots \sim h\times p\times p \times \cdots$.
Here $\Phi_{avg}:=\int\Phi h$.
\end{proposition}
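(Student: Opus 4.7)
The plan is to combine the assumed series expansion of $\delta h$ with the Birkhoff ergodic theorem for the stationary process that starts from the physical measure. The outer truncation at $W$ in the statement is precisely what lets me avoid any uniform-in-$L$ control of the tail of the series.

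First, by the hypothesis on the linear response of the physical measure,
\[
\delta \int \Phi\, h\, dx \;=\; \sum_{n\ge 0} \int \Phi(x)\, \bigl[(L_p L_f)^n \,\delta(L_p L_{f_\gamma}) h \bigr](x)\, dx .
\]
For each fixed $n$ I would repeat the manipulations already used in the proof of \cref{t:finiteT}: apply \cref{l:PhiMC} to unfold the $n$ iterations of $L_p L_f$ as independent noise integrations, then apply \cref{l:niu} to peel off $\delta(L_p L_{f_\gamma})$ at the innermost step. This produces
\[
\int \Phi\, (L_p L_f)^n \,\delta(L_p L_{f_\gamma}) h\, dx \;=\; -\int \Phi(x_{n+1})\, \delta f_\gamma (x_0) \cdot \frac{dp}{p}(y_1)\, h(x_0)\, dx_0\, (p\,dy)_{1\sim n+1},
\]
with $x_{n+1}$ built from $x_0, y_1, \dots, y_{n+1}$ by $x_{k+1}=f(x_k)+y_{k+1}$. \Cref{l:int0} then lets me replace $\Phi$ by $\Phi-\Phi_{avg}$ without changing the value, matching the centralized form in the statement.

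Next I would convert each fixed-$n$ integral into an orbitwise time average. Because $h$ is invariant, starting from $X_0\sim h$ with i.i.d.\ $Y_l\sim p$ produces a stationary process on the skew product $\R^M\times(\R^M)^{\N}$; for every shift $l$ the joint law of $(X_l, Y_{l+1}, X_{n+1+l})$ coincides with that of $(x_0, y_1, x_{n+1})$ inside the integral above. Assuming ergodicity of this stationary process (a standard consequence of uniqueness of $h$ for smooth non-degenerate noise), Birkhoff gives, for each fixed $n$ and almost every sample path,
\[
-\int (\Phi(x_{n+1}) - \Phi_{avg})\, \delta f_\gamma x_0 \cdot \frac{dp}{p}(y_1)\, h(x_0)\, dx_0\, (p\,dy)_{1\sim n+1} \;=\; \lim_{L\to\infty} -\frac 1 L \sum_{l=1}^L (\Phi(X_{n+1+l}) - \Phi_{avg})\, \delta f_\gamma X_l \cdot \frac{dp}{p}(Y_{1+l}).
\]
A finite sum of almost-sure equalities is again almost sure (intersect countably many probability-one events), so for each $W\in\N$ the same identity holds after summing $n$ from $0$ to $W$ and then taking $L\to\infty$. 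Letting $W\to\infty$ on both sides, and invoking the assumed convergence of the linear-response series on the left, then yields the stated formula.

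I expect the main obstacle to lie in the ergodicity step and the integrability required for Birkhoff: one needs the skew-product system to be ergodic, and the kernel $(\Phi-\Phi_{avg})\,\delta f_\gamma \cdot (dp/p)$ to be in $L^1$ of the stationary measure. The $C^2$ and decay hypotheses on $p$ together with boundedness of $\Phi$ and $\delta f_\gamma$ make integrability routine; ergodicity is the standard consequence of uniqueness of the invariant density in the smooth-noise setting, and it is implicit in the existence-and-uniqueness hypothesis carried from \cref{s:prep}. Everything else is bookkeeping of indices to match the shift $l\mapsto l+n+1$ between the two factors in the orbitwise sum.
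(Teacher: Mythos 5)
Your proposal is correct and follows essentially the same route as the paper's proof: expand $\delta h$ via the assumed series, reduce each fixed-$n$ term to the integral form using the argument of \cref{t:finiteT}, centralize $\Phi$ via \cref{l:int0}, and convert to an orbitwise average by stationarity of $\{X_l, Y_{l+1},\dots\}$ and Birkhoff's theorem, taking $L\to\infty$ before $W\to\infty$. Your explicit remarks on intersecting countably many full-measure events and on the ergodicity needed for Birkhoff are slightly more careful than the paper's write-up but do not change the argument.
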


\begin{remark*}
(1)
So far, for all cases where existence of $h$ and $\delta h$ can be proved, the formulas are all equivalent to the one stated in the assumptions (this formula might be the \textit{only} choice).
This paper does not attempt to prove the existence of $h $ and  $ \delta h $.
(2)
Since $p$ is smooth density, the support of $h$ must have positive Lebesgue measure, so we can actually start from, and converge according to, any Lebesgue-equivalent measures on the support of $h$.
\end{remark*} 

\begin{proof}
  By \cref{e:xu}, 
  \[ \begin{split}
    \delta \int \Phi(x) h(x) dx
    = \int \Phi(x) \delta h (x) dx
    \\
    = \lim_{W\rightarrow\infty} \sum_{n=0}^W \int \Phi(x) \left( (L_f L_p)^{n} \delta(L_p L_{f_\gamma} ) h \right) (x) dx
  \end{split} \]
  By the same argument as \cref{t:finiteT},
  \[ \begin{split}
    \int \Phi(x_{n+1}) \left( (L_f L_p)^{n} \delta(L_p L_{f_\gamma} ) h \right) (x_{n+1}) dx_{n+1} \\
    = - \int \Phi(x_{n+1}) \delta f_\gamma x_0 \cdot \frac{dp}p(y_1) h(x_0) dx_0 (pdy)_{1\sim n+1}
  \end{split} \]
  Since $h$ is invariant for the dynamic $X_{n+1}=f(X_n)+Y_{n+1}$, if $X_0\sim h$, then $\{X_l, Y_{l+1},\cdots,Y_{l+n}\}_{l\ge0}$ is a stationary sequence, so we can apply Birkhoff's ergodic theorem (the version for stationary sequences), 
  \[ \begin{split}
    \,\overset{\mathrm{a.s.}}{=}\, 
    \lim_{L\rightarrow\infty} - \frac 1L \sum_{l=1}^L \Phi(X_{n+1+l}) \, \delta f_\gamma X_l \cdot \frac{dp}p(Y_{1+l})
  \end{split} \]
  By substitution we have
  \[ \begin{split}
    \delta \int \Phi(x) h(x) dx
    \,\overset{\mathrm{a.s.}}{=}\, 
    \lim_{W\rightarrow\infty} \lim_{L\rightarrow\infty} - \sum_{n=0}^W \frac 1L \sum_{l=1}^L \Phi(X_{n+1+l}) \, \delta f_\gamma X_l\cdot \frac{dp}p(Y_{1+l})
  \end{split} \]
  Then we can rerun the proof after centralizing $\Phi$.
\end{proof}

\subsection{An intuitive explanation}
\hfill\vspace{0.1in}
\label{s:intui}

We give an intuitive explanation to \cref{l:delta} and \cref{l:niu}.
First, we shall adopt a more intuitive but restrictive notation.
Define $\tf(x,\gamma)$ such that 
\[ \begin{split}
  \tf (f(x), \gamma) = f_\gamma(x)
\end{split} \]
In other words, we write $f_\gamma $ as appending a small perturbative map $\tf$ to $f:=f_{\gamma=0}$.
Here $\tf$ is the identity map when $\gamma=0$.
Note that $\tf$ can be defined only if $f$ satisfies 
\[ \begin{split}
  f_\gamma(x) = f_\gamma(x')
  \quad \textnormal{whenever} \quad 
  f(x) = f(x').
\end{split} \]
For example, when $f$ is bijective then we can well-define $\tf$.
Hence, this new notation is more restrictive than what we used in other parts of the paper.
But it lets us see more clearly what happens during the perturbation.

With the new notation, the dynamics can be written now as
\[ \begin{split}
  X_n =  \tf f(X_{n-1}) + Y_n ,
  \quad \textnormal{where} \quad 
  Y_n \iid  p.
\end{split} \]
By this notation, only $f$ changes time step, but $\tf$ and adding $Y$ do not change time.
In this subsection we shall start from after having applied the map $f$, and only look at the effect of applying $\tf$ and adding noise:
this is enough to account for the essentials of \cref{l:delta} and \cref{l:niu}.
Roughly speaking, the $h'$ we use in the following is in fact $h':=L_f h$;
$h'$ is the density of $z_0:=fx_{-1}$, where $x_{-1}$ is from the previous step, so in the current step $z_0+y_0=x_0$, and we omit the subscript $0$.

With the new notation, \cref{l:delta} is essentially equivalent to
\begin{equation} \begin{split} \label{e:dao}
  \delta (L_p L_{\tf} h') (x)
  = \int -  \delta \tf z \cdot dp(x - z) \, h'(z) dz.
\end{split} \end{equation}
We explain this intuitively by \cref{f:intui}.
Let $z$ be distributed according to $h'$, then $L_p h'$ is obtained by first attach a density $p$ to each $z$, then integrate over all $z$.
$L_p L_\tf h'$ is obtained by first move $z$ to $\tf z$ and then perform the same procedure.
Hence, $\delta L_p L_\tf h'(x)$ is first computing  $\delta p_{\tf z}(x)$ for each $z$, then integrate over $z$.
Let
\[ \begin{split}
   p_{\tf z}(x) := p(x - \tf z)
\end{split} \]
be the density of the noise centered at $\tf z$.
So
\[ \begin{split}
  \delta p_{\tf z}(x)
  = -dp_{\tf z}(x) \cdot \delta \tf z (x)
  = -dp(x - \tf z) \cdot \delta \tf z.
\end{split} \]
Here $\delta\tf z (x)$ in the middle expression is the horizontal shift of  $p_{\tf z}$'s level set previously located at $x$.
Since the entire Gaussian distribution is parallelly moved on the Euclidean space $\R^M$, so $\delta\tf z (x)=\delta\tf z$ is constant for all $x$.
Then we can integrate over $z$ to get \cref{e:dao}.

\begin{figure}[ht] \centering
  \includegraphics[scale=0.4]{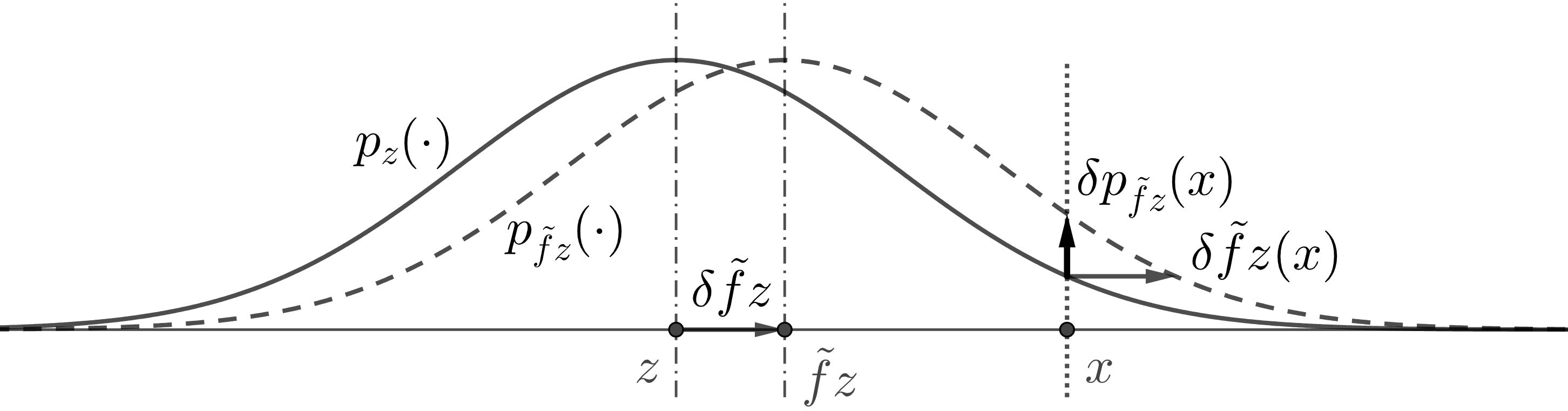}
  \caption{Intuitions for \cref{l:delta} and \cref{l:niu}.}
  \label{f:intui}
\end{figure}

\Cref{l:niu} is equivalent to
\[ \begin{split}
   \delta \int \Phi(x)  (L_p L_\tf h') (x) dx
  = \iint - \Phi(x) \delta \tf (z)\cdot \frac {dp(y)}{p(y)} \, h'(z) dz \, p(y) dy,
\end{split} \]
where $x$ on the right is a function $x=z+y$.
Intuitively, this says that the left side equals to first compute
$\delta \int \Phi(z+y) p_{\tf z}(y) dy $
for each $z$, then integrate over $z$.
The integration on the left uses $x$ as dummy variable, which is convenient for the transfer operator formula above.
But it does not involve a density for $x$, so is not immediately ready for Monte-Carlo.
The right integration is over $h'(z)$ and $p(y)$, which are easy to sample.

It is important that we differentiate only $p$ but not $f$.
In fact, our core intuitions are completely within one time step, and do not even involve $f$.
Hence, we can easily generalize to cases where $f$ is bad, for example, when $f$ is not bijective, or when $f$ is not hyperbolic: these are all very difficult cases for deterministic systems.
Moreover, our algorithm is different from prevailing algorithms, which average some linear response formulas for deterministic systems over many samples.
Since the formula for quenched linear response all involve the derivative of $f$, they are hindered by either the gradient explosion, the dimensionality, or the lack of hyperbolicity.

\subsection{Further generalizations}
\hfill\vspace{0.1in}
\label{s:generalize}

\subsubsection{\texorpdfstring{$p$}{p} depends on \texorpdfstring{$z$}{z} and \texorpdfstring{$\gamma$}{gamma}}
\hfill\vspace{0.1in}


Our pictorial intuition does not care whether $p$ depends on $\gamma$ or $z$. So we can generalize all of our results to the case
\[ \begin{split}
  X_{n+1} = f(X_n) +Y_{n+1},
  \quad \textnormal{where} \quad 
  \Pro(Y_{n+1}=y \, | \, f(X_n)=z) = p_{\gamma,z}(y).
\end{split} \]
We still assume the same regularity for $p$, $\Phi$, and $f$.
The long-time case also further assumes the existence of $h$ and $\delta h$.
We do not repeat the proof; rather, we directly write down these generalizations for future references.

\Cref{e:Lppt} and \cref{e:Lpint} become
\[ \begin{split}
  L_{p_\gamma} h' (x) 
  = \int h'(x-y) p_{\gamma,x-y}(y) dy
  = \int h'(z) p_{\gamma,z}(x-z) dz.
  \\
  \int \Phi (x) L_{p_\gamma} h' (x) dx
  = \iint \Phi(y+z) h'(z) p_{\gamma,z}(y) dz dy.
\end{split} \]
\Cref{l:PhiMC} becomes
  \[ \begin{split}
    \int \Phi(x_n) ((L_{p_\gamma} L_f)^n h_0) (x_n) dx_n
  = \int \Phi(x_n) h_0(x_0) dx_0 p_{\gamma,z_1}(y_1) dy_1 \cdots p_{\gamma,z_n}(y_n) dy_n.
  \end{split} \]
Here $z_m$ and $x_m$ on the right are recursively defined by $z_m=f(x_{m-1})$, $x_m=z_m + y_m$.
We also have the pointwise formula
\[ \begin{split}
  (L_{p_\gamma} L_{f_\gamma} h) (x_1)
  = \int h(x_0) p_{\gamma,f_\gamma x_0}(x_1-f_\gamma x_0) dx_0.
\end{split} \]

Since now $p$ depends on $\gamma$ via three ways, \cref{l:delta} becomes
\[ \begin{split}
  \delta (L_{p_\gamma} L_{f_\gamma} h) (x_1)
  = \int \dd p\gamma (x_0,y_1) h(x_0) dx_0,
  \quad \textnormal{where} \quad 
  z_1 = fx_0,
  \quad
  y_1 = x_1-z_1,
  \\
  \dd p\gamma (x_0,y_1) 
  := \dd {}\gamma p_{\gamma, f_\gamma x_0} (x_1-f_\gamma x_0)
  = \delta p (z_1, y_1) + \delta f_\gamma (x_0)\cdot \left(\pp p z -  \pp p y\right) (z_1,y_1) .
\end{split} \]
Here derivatives $\pp p z$ and $\pp p y$ refer to writing the density as $p_{\gamma,z}(y)$.
If $p$ does not depend on $\gamma$ and $z$, then we recover \cref{l:delta}.
\Cref{l:niu} becomes
\[ \begin{split}
  \delta \int \Phi(x_1) (L_p L_{f_\gamma} h) (x_1) dx_1
  = \int \Phi(x_1) \delta (L_p L_{f_\gamma} h) (x_1) dx_1
  \\
  = \iint \Phi(x_1) 
  \frac {dp}{p d\gamma} (x_0,y_1) h(x_0) dx_0 \, p(y_1) dy_1.
\end{split} \]
Here 
\[ \begin{split}
  \frac {dp}{p d\gamma} (x_m,y_{m+1})
  := \frac 1 { p_{f x_m}(y_{m+1}) }
  \dd {}\gamma p_{\gamma, f_\gamma x_m} (x_{m+1}-f_\gamma x_m)
\end{split} \]
\Cref{t:finiteT} becomes
\[ \begin{split}
  \delta \int \Phi(x_T) h_T(x_T) dx_T
  = \int \Phi(x_T) \sum_{m=0} ^ {T-1} 
  \frac {dp}{p d\gamma} (x_m,y_{m+1}) 
  h_0(x_0) dx_0 (p_z dy)_{1\sim T}.
\end{split} \]  
Here $(p_zdy)_{1\sim T}:=p_{z_1}(y_1)dy_1\cdots p_{z_T}(y_T)dy_T$ is the distribution of $Y_n$'s;
$z_{m+1}=fx_m$ is a function of dummy variables $x_0, y_1, \cdots, y_m$.

We still have free centralization for $\Phi$.
So \cref{l:dhTn} and \cref{l:dh} still hold accordingly,
in particular, when the system is time-homogeneous and has physical measure $h$, we still have
\[ \begin{split}
  \delta \int \Phi(x) h(x) dx
  \,\overset{\mathrm{a.s.}}{=}\, 
  \lim_{W\rightarrow\infty} \lim_{L\rightarrow\infty} - \frac 1L \sum_{n=0}^W  
  \sum_{l=1}^L \left( \Phi(X_{n+1+l}) - \Phi_{avg} \right) \,
  \frac{dp}{pd\gamma}(X_l,Y_{1+l}).
\end{split} \]

\subsubsection{General random dynamical systems}
\hfill\vspace{0.1in}
\label{s:rand}

For general random dynamical systems, at each step $n$, we randomly select a map $g$ from a family of maps, denote this random map by $G$, the dynamics is now
\[ \begin{split}
  X_{n+1} = G(X_n).
\end{split} \]
The selection of $G$'s are independent among different $n$.
It is not hard to see our pictorial intuition still applies, so our work can generalize to this case.

We can also formally transform this general case to the case in the previous subsubsection.
To do so, define the deterministic map $f$
\[ \begin{split}
  f(x) := \E [G(x)],
\end{split} \]
where the expectation is with respect to the randomness of $G$.
Then the dynamic equals in distribution to 
\[ \begin{split}
  X_{n+1} = f(X_n) + Y_{n+1},
  \quad \textnormal{where} \quad 
  Y_{n+1} = F(X_n) - f(X_n).
\end{split} \]
Hence the distribution of $Y_{n+1}$ is completely determined by $X_n$.

The caveat is that we still need to compute the derivatives of the distribution of $Y$, which is equivalent to that of $F$.
This can be an extra difficulty; but sometimes we have to take on this difficulty.
On the other hand, if we start from deterministic case, and use randomness to obtain approximations, then it should be easier to use the additive form and a simple distribution for $Y$.

\section{No-propagate response algorithm}
\label{s:algorithm}

\subsection{Finite \texorpdfstring{$T$}{T} and time-inhomogeneous case}
\hfill\vspace{0.1in}
\label{s:dhT}

We give the procedure list of the no-propagate algorithm for time-inhomogeneous $\delta h_T$ in \cref{l:dhTn}.
Here $L$ is the number of sample paths whose initial conditions are generated independently from $h_0$.
This algorithm requires that we already have a random number generator for $h_0$ and  $p$:
this is typically easier for $p$ since we tend to use simple $p$ such as Gaussian; but we might ask for specific $h_0$, which requires more advanced sampling tools.

\vspace{0.2in}\hrule\vspace{0.05in}
\begin{algorithmic}
\Require $L$, random number generators for densities $h_0$ and $p_m$.
\For{$l = 1, \dots, L$}
  \State Independently generate sample $x_0$ from $X_0\sim h_0$, 
  \For{$m = 0, \dots, T-1$}
    \State Independently generate sample $y_{m+1}$ from $Y_{m+1}\sim p_{m+1}$
    \State $I_{l,m+1} \gets \delta f x_{l,m} \cdot \frac {dp}{p} (y_{m+1}) $
    \State $x_{l,m+1} \gets f(x_{l,m}) + y_{m+1}$
  \EndFor
  \State $\Phi_{l,T}\gets\Phi_T (x_{l,T})$
  \Comment $\Phi_{l,T}$ takes less storage than $x_{l,T}$.
  \State $S_{l} \gets - \sum_{m=1}^{T}  I_{l,m}$
  \Comment No need to store $x_m$ or $y_m$.
\EndFor
\State{$\Phi_{avg,T} := \int \Phi_T h_{\gamma,T} dx 
\approx \frac 1L \sum_{l=1}^L \Phi_{l,T}$
}
\Comment{
Evaluated at $\gamma=0$.
}
\State $\delta \Phi_{avg,T} \approx \frac 1L \sum_{l=1}^L S_l \left( \Phi_T (x_{l,T}) -\Phi_{avg,T} \right)$ 
\end{algorithmic}
\vspace{0.05in}\hrule\vspace{0.2in}

This is automatically an adjoint algorithm.
In fact, the notion of `adjoint algorithms' does not quite apply to no-propagate algorithms, since we do not compute the Jacobian matrix at all, so the most expensive operation per step is computing $f(x_m)$, which is not much more expensive than computing $\delta f(x_m)$.
The cost for a new parameter (i.e. a new $\delta f$) in this algorithm equals  that cost in any other adjoint algorithms, which is cheaper than the first parameter of any other algorithms which involve forward or backward propagations.

We remind readers of some useful formulas when we use Gaussian noise in $\R^M$.
Let the mean be $\mu\in\R^M$, the covariance matrix be $\Sigma$, which is defined by
$ \Sigma_{i,j} := \operatorname{E} [(Y_i - \mu_i)( Y_j - \mu_j)] = \operatorname{Cov}[Y_i, Y_j] $.
Then the density is
\[ \begin{split}
p (y) = (2\pi)^{-k/2}\det (\Sigma)^{-1/2} \, \exp \left( -\frac{1}{2} (y - \mu)^\mathsf{T} \Sigma^{-1}(y - \mu) \right)
\end{split} \]
Hence we have
\[ \begin{split}
\frac{dp}p (y)
= - \Sigma^{-1} (y-\mu) 
\in \R^M.
\end{split} \]
Typically we use normal Gaussian, so $\mu = 0$ and  $\Sigma = \sigma^2 I_{M\times M}$, so we have
\[ \begin{split}
\frac{dp}p (y)
= - \frac 1{\sigma^2} y
\in \R^M.
\end{split} \]

\subsection{Infinite time and time-homogeneous case}
\hfill\vspace{0.1in}

We give the procedure list of the no-propagate algorithm for time-homogeneous $\delta h$ in \cref{l:dh}.
Here $M_{pre}$ is the number of preparation steps, during which the background measure is evolved, so that $x_0$ is distributed according to the physical measure $h$.
Here $W$ is the decorrelation step number, typically $W \ll L$, where $L$ is the orbit length.
Since here $h$ is given by the dynamic, we only need to sample the easier density $p$.

\vspace{0.2in}\hrule\vspace{0.05in}
\begin{algorithmic}
\Require $M_{pre}, W\ll L$, random number generator for $p$.
\State Take any $x_0$.
\For{$m = 0, \cdots, M_{pre}$} 
  \Comment{To land $x_0$ onto the physical measure.}
  \State Independently generate sample $y$ from density $h$
  \State $x_0 \gets f(x_0)+y$
\EndFor
\For{$m = 0, \cdots, W+L-1$}
    \State Independently generate sample $y_{m+1} $ from $Y_{m+1} \sim p$
    \State $I_{m+1} \gets \delta \tf (f x_{m})\cdot \frac {dp}{p} (y_{m+1}) $
    \State $x_{m+1} \gets f(x_m)+y_{m+1} $
    \State $\Phi_{m+1} \gets \Phi(x_{m+1})$
\EndFor
\State $\Phi_{avg}:=\int \Phi h_{\gamma} dx\approx \frac 1L \sum_{l=1}^L \Phi_l$
\State $\delta \Phi_{avg} \approx  - \frac 1L \sum_{n=0}^W  \sum_{l=1}^L \left(\Phi_{n+l} - \Phi_{avg} \right) I_{l}$ 
\end{algorithmic}
\vspace{0.05in}\hrule\vspace{0.2in}

\section{Numerical examples}
\label{s:examples}

This section demonstrates the no-propagate response algorithm on several examples.
We no longer label the subscript $\gamma$, and $\delta$ can be the derivative for nonzero $\gamma$; the dependence on $\gamma$ should be clear from context.
The codes used in this section are all at \url{https://github.com/niangxiu/np}.

\subsection{Tent map with elevating center}
\hfill\vspace{0.1in}
\label{s:failEx}

We demonstrate the algorithm on the tent map.
The dynamics is
\[ \begin{split}
  X_{n+1}=f_\gamma(X_n)+Y_{n+1}
  \quad \textnormal{mod 1,} \quad 
  \quad \textnormal{where} \quad 
  \\
  Y_n\iid \cN(0,\sigma^2),
  \quad \textnormal{} \quad 
  f_\gamma(x) = \begin{cases}
    \gamma x 
    \quad\text{if}\quad 0\le x\le 0.5,
    \\
    \gamma(1-x)
    \quad\text{otherwise.}
  \end{cases}
\end{split} \]
In this subsection, we fix the observable
\[ \begin{split}
\Phi(x)=x
\end{split} \]

Previous linear response algorithms based on randomizing algorithms for deterministic systems do not work on this example.
In fact, it is proved by Baladi that for the deterministic case ($\sigma=0$), $L^1$ linear response does not exist \cite{baladi07}.
The fast response algorithm for deterministic systems fails on this example, due that Monte-Carlo integration does not work on the second derivative $f''$, which is a delta functions.
The ensemble method also fails due to the exploding gradient.

We shall demonstrate the no-propagate algorithm on this example, and show that the noise case can give a useful approximate linear response to the deterministic case.
In the following discussions, unless otherwise noted, the default values for the (hyper-)parameters are
\[ \begin{split}
 \gamma=3, \quad
 \sigma=0.1, \quad 
 W=7. 
\end{split} \]

\begin{figure}[ht] \centering
  \includegraphics[height=7cm]{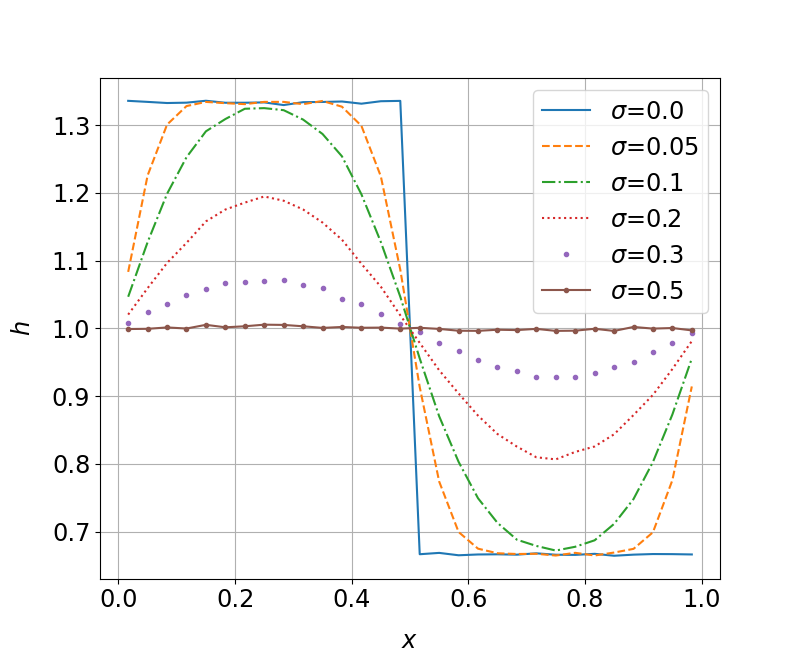} 
  \caption{
  Densities $h$ of different $\sigma$. 
  Here $L=10^7$.
  }
  \label{f:tentden}
\end{figure}

We first test the effect of adding noise on the physical measure.
As shown in \cref{f:tentden}, the density converges to the deterministic case as $\sigma\rightarrow 0$.
This and later numerical results indicate that we can wish to find some approximate `linear response' for the deterministic system, which provides some useful information about the trends between the averaged observable and $\gamma$.

\begin{figure}[ht] \centering
  \includegraphics[scale=0.4]{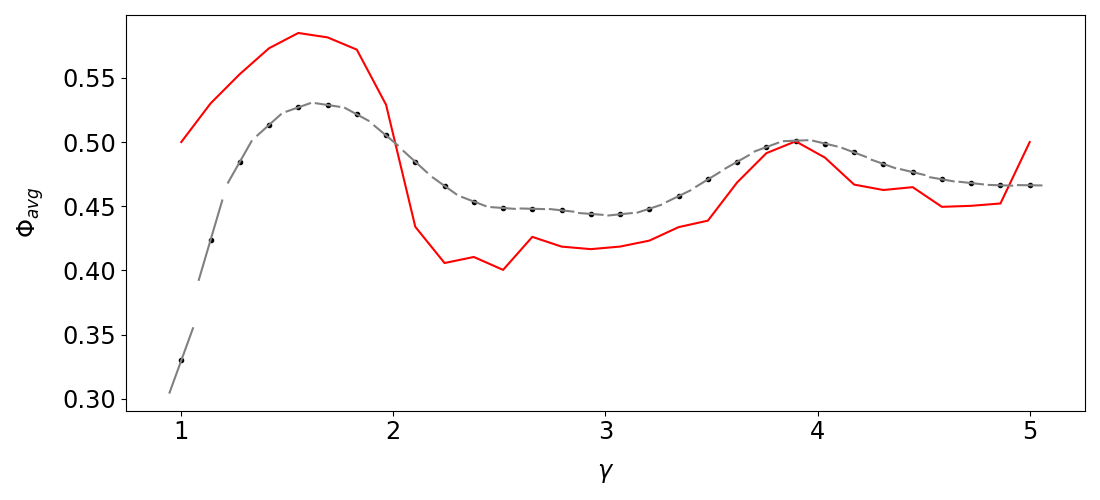}
  \caption{$\Phi_{avg}$ and $\delta \Phi_{avg} $ for different parameter $\gamma$.
  Here $L=10^6$ for all orbits.
  The dots are $\Phi_{avg}$, and the short lines are $\delta \Phi_{avg} $ computed by the no-propagate response algorithm; they are computed from the same orbit.
  The long red line is $\Phi_{avg}$ when there is no noise ($\sigma=0$).
  }
\end{figure}

Then we run the no-propagate algorithm  to compute linear responses for different $\gamma$.
This is the main scenario where the algorithm is useful, that is, we want to know the relation between $\Phi_{avg}=\int \Phi h dx$ and $\gamma$.
As we can see, the algorithm gives an accurate linear response for the noisy case.
Moreover, the linear response in the noisy case is an reasonable reflection of the observable-parameter relation of the deterministic case.
Most importantly, the local min/max of the noisy and deterministic cases tend to locate similarly.
Hence, we can use the no-propagate algorithm of the noisy case to help optimizations of the deterministic case.

\begin{figure}[ht] \centering
  \includegraphics[width=0.45\textwidth]{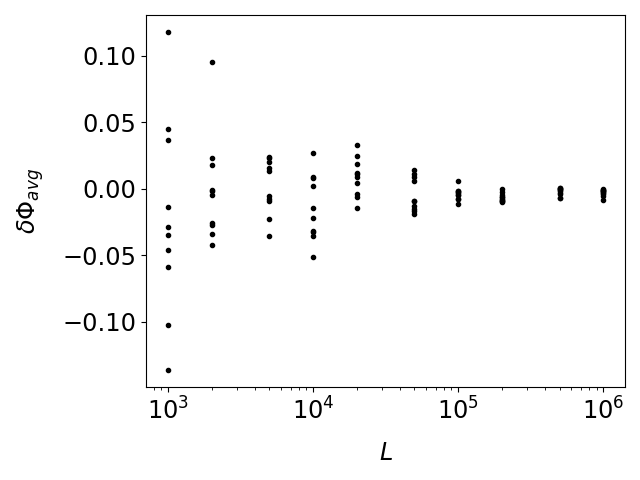}
  \includegraphics[width=0.45\textwidth]{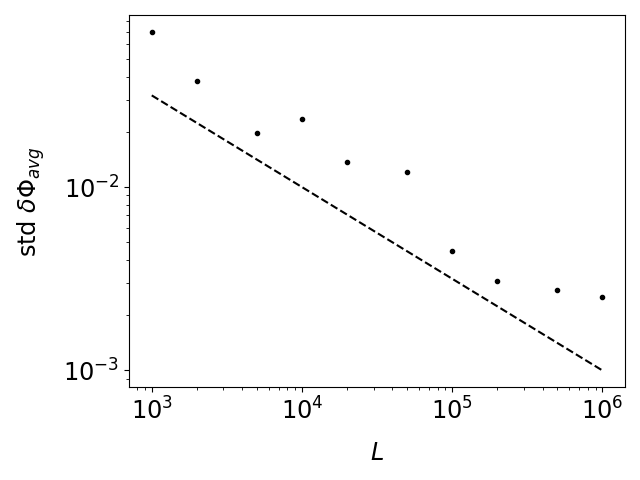}
  \caption{Effects of orbit length $L$.
  Left: derivatives from 10 independent computations for each $L$.
  Right: the standard deviation of the computed derivatives, where the dashed line is $L^{-0.5}$.}
  \label{f:tent_L}
\end{figure}

Then we show the convergence of the no-propagate algorithm with respect to $L$ in \cref{f:tent_L}.
In particular, the standard deviation of the computed derivative is proportional to $L^{-0.5}$.
This is the same as the classical Monte Carlo method for integrating probabilities, with $L$ being the number of samples.
This is expected: since here we are given the dynamics, so our samples are canonically from a long orbit.

\begin{figure}[ht] \centering
  \includegraphics[width=0.45\textwidth]{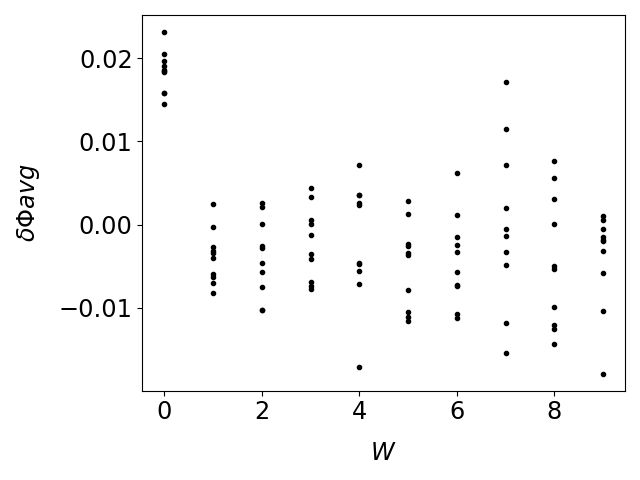}
  \includegraphics[width=0.45\textwidth]{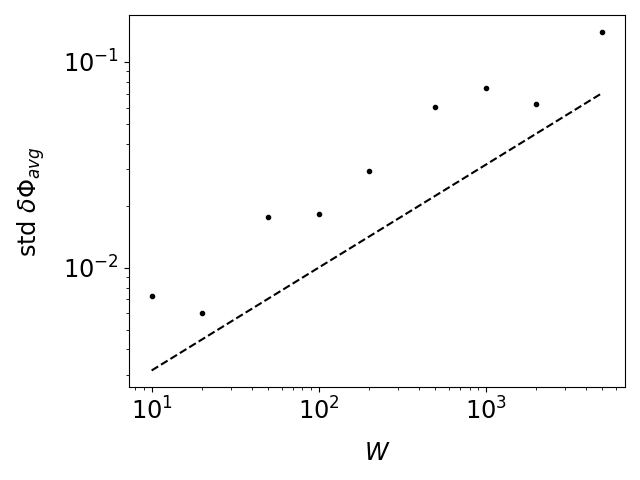}
  \caption{Effects of $W$. 
  Here $L=10^5$.
  Left: derivatives computed by different $W$'s.
  Right: standard deviation of derivatives, where the dashed line is $0.001W^{0.5}$.}
  \label{f:tent_W}
\end{figure}

\Cref{f:tent_W} shows that the bias in the average derivative decreases as $W$ increases, but the standard deviation increases roughly like $O(W^{0.5})$.
Note that if we do not centralize $\Phi$, then the standard deviation would be like $O(W)$.

\subsection{A chaotic neural network}
\hfill\vspace{0.1in}
\label{s:21dim}

Many work in machine learning aims to suppress gradient explosion, which is essentially the definition of chaos.
However, even with modern architecture, there is no guarantee that gradient explosion (or chaos) can be precluded.
In this subsection we use the no-propagate algorithm to compute the linear response of a chaotic neural network.
Unlike stochastic back-propagation method, we do not perform propagation at all, so we are not hindered by gradient explosion.
But we need to assume noise at each layer; this may introduce a systematic error, if the original model does not have such noise.

Our phase spaces are $X_n,Y_n\in\R^9$, $n=0,1,\cdots, T$, where $T=50$, and the dynamic is 
\[ \begin{split}
  X_{n+1}=f_\gamma(X_n)+Y_{n+1}
  \quad \textnormal{where} \quad 
  Y_n\iid \cN(0,0.5^2 I),
  \\
  f_\gamma(x) = J \tanh (x + \gamma \one),
  \quad \textnormal{} \quad 
  J = CJ_0,
  \quad \textnormal{where} \quad 
  \\
  J_0 = 
  \left[\begin{array}{ccccccccc}
 -0.54 &-1.19 &-0.33 & 1.66 &-0.5  &-1.3  & 1.52 &-0.5  & 1.95 \\
 -1.6  &-1.55 &-1.45 & 0.61 & 1.92 & 0.59 &-0.16 &-1.14 &-1.27 \\
 -0.59 &-0.65 &-1.32 &-1.46 &-0.82 &-0.95 &-1.47 &-0.08 &-0.38 \\
 -0.78 &-0.26 & 0.87 & 1.99 & 0.07 & 0.87 &-0.79 &-0.44 & 1.11 \\
  0.8  &-1.28 &-0.52 &-1.01 & 1.49 & 1.49 &-1.65 &-0.45 & 0.21 \\
 -1.77 & 0.03 &-1.39 &-0.28 & 0.44 & 1.27 & 0.61 & 0.01 &-0.02 \\
 -0.18 &-0.29 &-0.73 & 0.53 &-0.82 &-1.58 &-1.41 & 0.07 &-1.84 \\
  0.64 & 0.86 & 0.73 & 0.96 &-0.06 & 0.04 & 1.1  & 1.22 &-0.28 \\
  1.18 &-1.95 &-0.37 & 0.01 & 1.24 &-0.32 & 0.43 & 0.06 &-1.28
  \end{array}\right].
\end{split} \]
Here $I$ is the identity matrix, $\one=[1,\cdots, 1]$, and for $x =[x_1,\cdots,x_8], x_i\in\R$,
\[ \begin{split}
  \tanh(x)
  := [\tanh(x_1),\cdots,\tanh(x_8)].
\end{split} \]
We set $X_0 \sim \cN(0,I)$ and $h_0$ be its density.

There is a somewhat tight region for $C$ such that the system is chaotic: when $C<1$, then $J$ is small so the Jacobian is small; when $C>10$, then the points tend to be far from zero, so the derivative of $\tanh$ is small, so the Jacobian is also small.
Our choice $C=4$ gives roughly the most unstable network, and we roughly compare with the cost of the ensemble, or stochastic gradient, formula of the linear response, which is
\[ \begin{split}
  \delta \E (\Phi)
  = \E \left(\sum_{n=1}^{50} \delta f(x_{50-n}) \cdot f^{*n} d\Phi(x_{50})\right)
\end{split} \]
Here $f^*$ is the backpropagation operator for covectors, which is the transpose of the is the Jacobian matrix $Df$.
On average $|f^{*50}| \approx 10^4\sim10^5$, $d\Phi\cdot \delta f\approx 10$, so the integrand's size is about $10^5\sim 10^6$.
This would require about $10^{10} \sim 10^{12}$ samples (a sample is a realization of the 50-layer network) to reduce the sampling error to $O(1)$.

\begin{figure}[ht] \centering
  \includegraphics[scale=0.4]{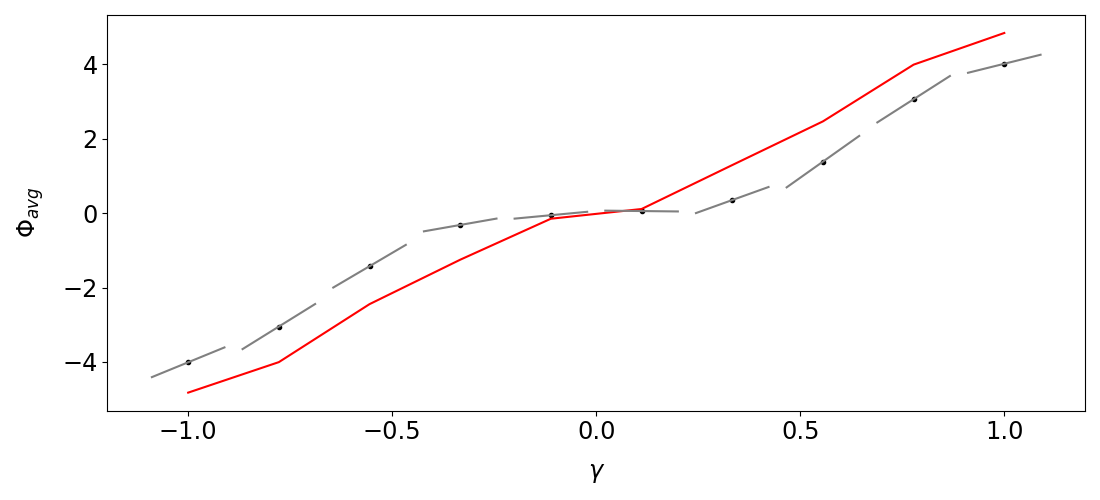}
  \caption{$\Phi_{avg}$ and $\delta \Phi_{avg} $ for different parameter $\gamma$.
  Here the total number of samples is $L=10^5$.
  The long red line is $\Phi_{avg}$ when there is no noise ($\sigma=0$).
  }
  \label{f:NN}
\end{figure}

\Cref{f:NN} shows the result of the no-propagate algorithm on this example.
The algorithm accurately reveals the derivative of our problem, which is also a good reflection of the parameter-objective trend of the zero-noise case.
The total time cost for running the algorithm on $L=10^5$ orbits to get a linear response, using a 1 GHz computer thread, is 60 seconds.

Our model is modified from its original form in \cite{Cessac04,Cessac06}.
In the original model, the entries of the weight matrix $J$ are randomly generated according to certain laws; as discussed in \cref{s:rand}, we can rewrite this randomness as an additive noise field, then obtain exact solutions to the original problem.
We can also further generalize this example to time-inhomogeneous cases.

Finally, we acknowledge that our neural network's architecture is outdated, but modern architectures are not good tests for our algorithm.
Because the backpropagation method does not work in chaos, current architectures typically avoid chaos.
With our work, we might potentially have much more freedom in choosing architectures beyond the current ones.

\section{Discussions}
\label{s:discuss}

The no-propagate algorithm is robust, does not have systematic error, has very low cost per step, and is not cursed by dimensionality.
But there is a caveat: when the noise is small, we need many data for the Monte-Carlo method to converge.
Hence we can not expect to use the small $\sigma$ limit to get an easy approximation of the linear response for deterministic systems.
In this section, we first give a rough cost-error estimation of the problem, then discuss how to potentially reduce the cost by further combining with the fast response algorithm, which was developed for deterministic linear responses.
We also compare with some seemingly similar algorithms for linear responses.

\subsection{A very rough cost-error estimation}
\hfill\vspace{0.1in}
\label{s:costErr}

When the problem is intrinsically random, the scale of noise $\sigma$ has been fixed.
For this case, there are two sources of error.
The first is due to using a finite decorrelation step number $W$; this error is $O(\theta^W)$ for some $0<\theta<1$.
The second is the sampling error due to using a finite number $L$ of samples.
Assume we use Gaussian noise in \cref{t:finiteT}, since we are averaging a large integrand to get a small number, we can approximate the standard deviation of the integrand by its absolute value.
The integrand is the sum of $W$ copies of $\frac{dp}p = -\frac y{\sigma^2}\sim \frac 1\sigma$, so the size is roughly $\sqrt W /\sigma$.
So the sampling error is $\sqrt W /\sigma\sqrt L$, where $L$ is the number of samples.
Together we have the total error $\eps$
\[ \begin{split}
  \eps = O(\theta^W) + 
  O \left(\frac {\sqrt W} {\sigma\sqrt L} \right).
\end{split} \]

This gives us a relation among $\eps, W$, and $L$, where $L$ is proportional to the overall cost.
In practice, we typically set the two errors be roughly equal,
which gives an extra relation for us to eliminate $W$ and obtain the cost-error relation
\[ \begin{split}
  \theta^W = \frac {\sqrt W} {\sigma\sqrt L},
  \quad \textnormal{so} \quad 
  L 
  = O \left( \frac { W} {\sigma^2 \theta^{2W}} \right)
  = O \left( \frac {\log _\theta \eps} {\sigma^2 \eps^{2}} \right)
  .
\end{split} \]
This is rather typical for Monte-Carlo method.
But the problem is that cost can be large for small $\sigma$.

On the other hand, if we use random system to approximate deterministic systems, then we can have the choice on the noise scale $\sigma$.
Now each step further incurs an approximation error $O(\sigma)$ on the measure.
This error decays but accumulates, and the total error on the physical measure is $O(\sigma/(1-\theta))$, which can be quite large compared to the one-step error $\sigma$.
Hence, if we are interested in the trend between $\Phi_{avg}$ and $\gamma$ for a certain stepsize $\Delta \gamma$ (this is typically known from the practical problem), then the error in the linear response is $O(\sigma/\Delta \gamma (1-\theta))$.
This explains why the random system has linear response whereas the deterministic system might not, since the error is large for $\Delta \gamma$ small.
Together we have the total error $\eps$
\begin{equation} \begin{split} \label{e:lian}
  \eps 
  = O(\theta^W) 
  + O \left(\frac {\sqrt W} {\sigma\sqrt L} \right)
  + O \left(\frac {\sigma} {\Delta \gamma (1-\theta)} \right).
\end{split} \end{equation}

Again, in practice we want the three errors to be roughly equal, which shall prescribe the size of $\sigma$,
\[ \begin{split}
  \sigma = \eps \Delta \gamma (1-\theta),
  \quad \textnormal{so} \quad 
  L 
  = O \left( \frac {\log _\theta \eps} {\sigma^2 \eps^{2}} \right)
  = O \left( \frac {\log _\theta \eps} {\eps^{4}  (1-\theta)^2} \right).
\end{split} \]
Since $1-\theta$ can be small, this cost can be much larger than just $\eps^{-4}$, which is already a high cost.

Finally, we acknowledge that our estimation is very inaccurate, but the point we make is solid, that is, the small noise limit is numerically expensive to achieve.

\subsection{A potential program to unify three linear response formulas}
\hfill\vspace{0.1in}
\label{s:unify}

We sketch a potential program on how to further reduce the cost/error of computing approximate linear response of non-hyperbolic deterministic systems.
As is known, non-hyperbolic systems do not typically have linear responses, so we must mollify, and in this paper we choose to mollify by adding noise in the phase space during each time step.
But as we see in the last subsection, adding a big noise increases the noise error, the third term in \cref{e:lian}; whereas small noise increases the sampling error, the second term in \cref{e:lian}.

The plausible solution is to add a big but local noise, only at locations where the hyperbolicity is bad.
For continuous-time case there seems to be an easy choice: we can let the noise scale be reverse proportional to the flow vector length.
This should at least solve the singular hyperbolic flow cases \cite{Viana2000,Wen2020}, where the bad points coincide with zero velocity.
But for discrete-time case it can be difficult to find a natural criteria which is easy to compute.

The benefit of this program is that, if the singularity set is low-dimensional, then the area where we add big noise is small, and the noise error is small.
On the other hand, we only use the no-propagate formula (the generalized version in \cref{s:generalize}) where the noise is big, so the sampling error is also small.
Where the noise is small, we use the fast-response algorithm, which is efficient regardless of noise scale, but requires hyperbolicity.

This program hinges on the assumption that the singularity set is small or low-dimensional.
It also requires us to invent one more technique, which transfers information from the no-propagate formula to the fast formulas.
For example, the equivariant divergence formula, in its original form, requires information from the infinite past and future \cite{TrsfOprt}.
But we can rerun the proof and restrict the time dependence to finite time; this requires extra information on the interface, such as the derivative of the conditional measure and the divergence of the holonomy map.
These information should and could be provided by the no-propagate formula.

Historically, there are three famous linear response formulas.
The ensemble formula does not work for chaos; 
the operator formula is likely to be cursed by dimensionality;
the random formula is expensive for small noise limit.
The fast response formula combines the ensemble and the operator formula, is given by recursive relations on one orbit, so is neither affected by chaos or high-dimension, but is rigid on hyperbolicity.
Now this paper writes the random formula on one orbit.
Looking into the future, besides trying to test above methods on specific tasks, we should also try to combine all three formulas together into one, which might provide the best approximate linear responses with highest efficiency.

\subsection{Comparison with kernel trick and ensemble backpropagation}
\hfill\vspace{0.1in}
\label{s:kernel}

The kernel trick for computing the derivative of $\Phi_{avg}(\gamma)$ is to compute the derivative of a mollified version, $\Phi_{avg} * \eta$, where $\eta(\cdot)$ is a smooth kernel in the \textit{parameter space}.
So the derivative can be moved to the kernel, and 
\[ \begin{split}
  \delta \Phi_{avg} (\gamma)
  \approx
  \delta (\Phi_{avg} * \eta)
  = \Phi_{avg} * \delta\eta
  = \int \Phi_{avg} (\gamma+\xi) \frac{\delta\eta}\eta (\xi)  \eta(\xi)d\xi
\end{split} \]
Then we sample the convolution by sampling $\xi$ according to $\eta$ and compute averages.
Also note that computing $\Phi_{avg}$ for each $\gamma$ further requires $L$ many samples.

The no-propagate algorithm convolutes in the \textit{phase space} at each time step, which might be more physically meaningful.
Also, our algorithm is more efficient when there are many parameters.
The marginal cost for a new parameter is much lower than the kernel trick, since the kernel trick roughly requires one more $\Phi_{avg}$ for one more parameter.

Then we compare with the ensemble method.
Denote the measure $\rho(\cdot):=\int (\cdot) h dx$, the no-propagate formula is the sum of terms like
\[ \begin{split}
  \rho(\delta f \cdot \frac {dp}p \Phi)
  \approx 
  \rho(\delta f \cdot \frac 1\sigma \Phi)
\end{split} \]
Here $\sigma$ is the scale of the noise.
Assuming there is no gradient vanishing or exploding, the ensemble formula is sum of terms like
$ \rho(\delta f \cdot d\Phi ) $.
The number of samples for the two method are similar when two integrand are of similar size, that is
\[ \begin{split}
  \sigma\approx \frac \Phi {|d\Phi|}\approx 
  \textnormal{ size of support of } \rho =:r
\end{split} \]
if $\Phi$ is not oscillating:
because after centralizing $\Phi$, the average of $\Phi$ is basically the variation of $\Phi$, which basically equals the integration of $d\Phi$ over the support of $\rho$.
But if $\Phi$ is oscillating, then $\frac \Phi {|d\Phi|}$ is typically smaller than $r$, so the no-propagate has similar cost as ensemble for $\sigma<r$.

If we want to use noise case to approximate deterministic case, then we want $\sigma\sim0.3r$, and the no-propagate would require about 10 times more samples than ensemble.
However, no-propagate does not involve backpropagation, so it does not require computing Jacobian or saving a forward orbit, so it is faster than ensemble per orbit.
Overall the cost should be similar; but the cost of the no-propagate algorithm is not affected by chaos.

Summarizing, the cost of the no-propagate and the ensemble algorithm are similar, but the trade-off is a relatively large noise error versus the ability to overcome chaos.

\section*{Acknowledgements}

The author is in great debt to  Caroline Wormell, Wael Bahsoun, and Gary Froyland for very helpful discussions.
This paper is partially supported by the China Postdoctoral Science Foundation 2021TQ0016 and the International Postdoctoral Exchange Fellowship Program YJ20210018.
This work is partially done during the author's postdoc at Peking University and during his visit to Mark Pollicott at the University of Warwick.

\section*{Data availability statement}
The code used in this manuscript is at \url{https://github.com/niangxiu/np}.
There is no other associated data.



\bibliographystyle{abbrv}
{\footnotesize\bibliography{library}}

\begin{thebibliography}{10}

\bibitem{Antown2022}
F.~Antown, G.~Froyland, and S.~Galatolo.
\newblock Optimal linear response for Markov hilbert–schmidt integral
  operators and stochastic dynamical systems.
\newblock {\em Journal of Nonlinear Science}, 32, 12 2022.

\bibitem{bahsoun20}
W.~Bahsoun, M.~Ruziboev, and B.~Saussol.
\newblock Linear response for random dynamical systems.
\newblock {\em Advances in Mathematics}, 364:107011, 4 2020.

\bibitem{baladi07}
V.~Baladi.
\newblock On the susceptibility function of piecewise expanding interval maps.
\newblock {\em Communications in Mathematical Physics}, 275:839--859, 8 2007.

\bibitem{Baladi2007}
V.~Baladi.
\newblock Linear response, or else.
\newblock {\em Proceedings of the International Congress of Mathematicians
  Seoul 2014}, pages 525--545, 2014.

\bibitem{srbflow}
R.~Bowen and D.~Ruelle.
\newblock The ergodic theory of axiom A flows.
\newblock {\em Inventiones Mathematicae}, 29:181--202, 1975.

\bibitem{Cessac04}
B.~Cessac and J.~A. Sepulchre.
\newblock Stable resonances and signal propagation in a chaotic network of
  coupled units.
\newblock {\em Physical Review E}, 70:056111, 11 2004.

\bibitem{Cessac06}
B.~Cessac and J.~A. Sepulchre.
\newblock Transmitting a signal by amplitude modulation in a chaotic network.
\newblock {\em Chaos: An Interdisciplinary Journal of Nonlinear Science},
  16:013104, 3 2006.

\bibitem{Dolgopyat2004}
D.~Dolgopyat.
\newblock On differentiability of SRB states for partially hyperbolic systems.
\newblock {\em Inventiones Mathematicae}, 155:389--449, 2004.

\bibitem{sedro23}
D.~Dragičević, P.~Giulietti, and J.~Sedro.
\newblock Quenched linear response for smooth expanding on average cocycles.
\newblock {\em Communications in Mathematical Physics}, 399:423--452, 4 2023.

\bibitem{Froyland2021}
G.~Froyland, D.~Giannakis, B.~R. Lintner, M.~Pike, and J.~Slawinska.
\newblock Spectral analysis of climate dynamics with operator-theoretic
  approaches.
\newblock {\em Nature Communications}, 12:6570, 11 2021.

\bibitem{Gallavotti1996}
G.~Gallavotti.
\newblock Chaotic hypothesis: Onsager reciprocity and fluctuation-dissipation
  theorem.
\newblock {\em Journal of Statistical Physics}, 84:899--925, 1996.

\bibitem{Ghil2020}
M.~Ghil and V.~Lucarini.
\newblock The physics of climate variability and climate change.
\newblock {\em Reviews of Modern Physics}, 92:035002, 7 2020.

\bibitem{Gottwald2016}
G.~A. Gottwald, C.~Wormell, and J.~Wouters.
\newblock On spurious detection of linear response and misuse of the
  fluctuation–dissipation theorem in finite time series.
\newblock {\em Physica D: Nonlinear Phenomena}, 331:89--101, 9 2016.

\bibitem{Gouezel2006}
S.~Gouëzel and C.~Liverani.
\newblock Banach spaces adapted to Anosov systems.
\newblock {\em Ergodic Theory and Dynamical Systems}, 26:189--217, 2006.

\bibitem{Hairer2010}
M.~Hairer and A.~J. Majda.
\newblock A simple framework to justify linear response theory.
\newblock {\em Nonlinearity}, 23:909--922, 4 2010.

\bibitem{resnet}
K.~He, X.~Zhang, S.~Ren, and J.~Sun.
\newblock Deep residual learning for image recognition.
\newblock pages 1--12, 2016.

\bibitem{Jameson1988}
A.~Jameson.
\newblock Aerodynamic design via control theory.
\newblock {\em Journal of Scientific Computing}, 3:233--260, 1988.
\newblock Firstintroduce adjoint method into aerodynamic design.

\bibitem{Korepanov2016}
A.~Korepanov.
\newblock Linear response for intermittent maps with summable and nonsummable
  decay of correlations.
\newblock {\em Nonlinearity}, 29:1735--1754, 5 2016.

\bibitem{Ni_CLV_cylinder}
A.~Ni.
\newblock Hyperbolicity, shadowing directions and sensitivity analysis of a
  turbulent three-dimensional flow.
\newblock {\em Journal of Fluid Mechanics}, 863:644--669, 2019.

\bibitem{fr}
A.~Ni.
\newblock Fast differentiation of chaos on an orbit.
\newblock {\em arXiv:2009.00595}, pages 1--28, 2020.

\bibitem{far}
A.~Ni.
\newblock Fast adjoint algorithm for linear responses of hyperbolic chaos.
\newblock {\em arXiv:2111.07692}, 11 2021.

\bibitem{Ni_asl}
A.~Ni.
\newblock Adjoint shadowing for backpropagation in hyperbolic chaos.
\newblock {\em arXiv:2207.06648}, 7 2022.

\bibitem{Ni_nilsas}
A.~Ni and C.~Talnikar.
\newblock Adjoint sensitivity analysis on chaotic dynamical systems by
  non-intrusive least squares adjoint shadowing (NILSAS).
\newblock {\em Journal of Computational Physics}, 395:690--709, 2019.

\bibitem{TrsfOprt}
A.~Ni and Y.~Tong.
\newblock Recursive divergence formulas for perturbing unstable transfer
  operators and physical measures.
\newblock {\em Journal of Statistical Physics}, 190:126, 7 2023.

\bibitem{Ni_NILSS_JCP}
A.~Ni and Q.~Wang.
\newblock Sensitivity analysis on chaotic dynamical systems by non-intrusive
  least squares shadowing (NILSS).
\newblock {\em Journal of Computational Physics}, 347:56--77, 2017.

\bibitem{clip_gradients2}
R.~Pascanu, T.~Mikolov, and Y.~Bengio.
\newblock On the difficulty of training recurrent neural networks.
\newblock {\em International conference on machine learning}, pages 1310--1318,
  2013.

\bibitem{srbmap}
D.~Ruelle.
\newblock A measure associated with axiom-a attractors.
\newblock {\em American Journal of Mathematics}, 98:619, 1976.

\bibitem{Ruelle_diff_maps}
D.~Ruelle.
\newblock Differentiation of SRB states.
\newblock {\em Commun. Math. Phys}, 187:227--241, 1997.

\bibitem{Ruelle_diff_maps_erratum}
D.~Ruelle.
\newblock Differentiation of SRB states: Correction and complements.
\newblock {\em Communications in Mathematical Physics}, 234:185--190, 2003.

\bibitem{Ruelle_diff_flow}
D.~Ruelle.
\newblock Differentiation of SRB states for hyperbolic flows.
\newblock {\em Ergodic Theory and Dynamical Systems}, 28:613--631, 2008.

\bibitem{Viana2000}
M.~Viana.
\newblock What’s new on lorenz strange attractors?
\newblock {\em The Mathematical Intelligencer}, 22:6--19, 6 2000.

\bibitem{Wen2020}
X.~Wen and L.~Wen.
\newblock No-shadowing for singular hyperbolic sets with a singularity.
\newblock {\em Discrete and Continuous Dynamical Systems- Series A},
  40:6043--6059, 10 2020.

\bibitem{wormell22}
C.~L. Wormell.
\newblock Non-hyperbolicity at large scales of a high-dimensional chaotic
  system.
\newblock {\em Proceedings of the Royal Society A: Mathematical, Physical and
  Engineering Sciences}, 478, 5 2022.

\bibitem{wormell_pieceMap}
C.~L. Wormell.
\newblock On convergence of linear response formulae in some piecewise
  hyperbolic maps.
\newblock 6 2022.

\bibitem{Wormell2019}
C.~L. Wormell and G.~A. Gottwald.
\newblock Linear response for macroscopic observables in high-dimensional
  systems.
\newblock {\em Chaos}, 29, 2019.

\bibitem{young2002srb}
L.-S. Young.
\newblock What are SRB measures, and which dynamical systems have them?
\newblock {\em Journal of Statistical Physics}, 108:733--754, 2002.

\end{thebibliography}

\end{document}